\def\rank{\mathop{\rm rank}\nolimits}
 \newtheorem{thm}{Theorem}[section]
 \newtheorem{cor}[thm]{Corollary}
 \newtheorem{lem}[thm]{Lemma}
 \newtheorem{prop}[thm]{Proposition}
 \theoremstyle{definition}
 \theoremstyle{remark}
 \newtheorem{rem}[thm]{Remark}
 \numberwithin{equation}{section}
\def\D{\,\mathrm d}
\def\e{\mathrm e}
\newcommand{\DS}{\displaystyle}
\def\D{\mathrm{d}}
\numberwithin{equation}{section}
\title{Frames for weighted shift-invariant spaces}
\author{Stevan Pilipovi\' c$^{1}$, Suzana
Simi\'c$^{2}$}
\begin{document}
\date{August 4, 2010}
\maketitle


\noindent
{\bf Abstract.}
In this paper we prove the equivalence of the frame property  and the closedness
for a weighted shift-invariant space
\[V^p_{\mu}(\Phi)=\Big\{\sum\limits_{i=1}^r\sum\limits_{j\in{\mathbb{Z}}^d}c_i(j)\phi_i(\cdot-j)
\;\Big|\;
\{c_i(j)\}_{j\in{\mathbb{Z}}^d}\in\ell^p_{\mu}\Big\},\quad
p\in[1,\infty],\] which  corresponds to
$\Phi=\Phi^r=(\phi_1,\phi_2,\ldots,\phi_r)^T\in(W^1_\omega)^r$. We, also, construct a sequence $\Phi^{2k+1}$ and the
sequence of spaces $V^p_\mu(\Phi^{2k+1})$, $k\in{\mathbb{N}}$, on
$\mathbb{R},$ with the useful properties in sampling,
approximations and stability.
\medskip

\noindent
{\bf 2000 Mathematics Subject Classification}: 42C15, 42C40,
42C99, 46B15, 46B35, 46B20

\medskip

\noindent
{\bf Key Words and Phrases}:{$p$-frame; Banach frame; weighted shift-invariant
space.}


\section{Introduction}\label{sec:1}

\indent

In this paper, we investigate weighted shift-invariant spaces quoted in the
abstract by following the methods from \cite{ast} and \cite{shin}.
 Such spaces fi\-gure in several areas of applied
mathematics, notably in wavelet theory and approximation theory
(\cite{ast}, \cite{bor}).
In recent years, they have been extensively studied  by many authors (see \cite{ac}-\cite{bor},
 \cite{fei7}-\cite{fei9}, \cite{ji3}, \cite{ji2}, \cite{shin}, \cite{xs}).
Sampling with non-bandlimited functions in shift-invariant spaces
is a suitable and realistic model for many applications, such as
modeling signals with the spectrum that is smoother then in the case of
bandlimited
 functions, or for the numerical implementation (see \cite{ni1}, \cite{deja}, \cite{fei2}, \cite{fei6}, \cite{fei3},
  \cite{fei10}).
 These requirements can often be met by choosing appropriate functions in $\Phi$.
 This means that the functions in  $\Phi$ have a shape corresponding to a parti\-cular
  impulse response of a device, or that they are compactly supported or that
  they have a Fourier transform  decaying  smoothly
  to zero as $|\xi|\rightarrow \infty$.

Weighted shift-invariant spaces $V^p_\mu(\Phi)$, $p\in[1,\infty]$, where
$\mu$ is a weight, were introduced for the non uniform sampling as
a direct generalization of the space $V^p(\Phi)$ (\cite{ac},
\cite{xs}). The determination of $p$ and the signal smoothness are
used for optimal compression and coding signals and images (see
\cite{deja}).

The first aim of this paper is to show that the main result of
\cite{ast} holds in the case of weighted shift-invariant spaces
which correspond
 to $L^p_{\mu}$ and $\ell^p_{\mu}$, i.e., weighted $L^p$ and $\ell^p$ spaces,
respectively. Namely, we follow \cite{ast} and \cite{shin} and prove assertions which
need additional arguments depending on the weights. We show that
under the appropriate conditions on the frame vectors, there is an
equivalence between the concept of $p$-frames, Banach frames with
respect to $\ell^p_\mu$ and closedness of the space which they
generate. A weighted analog of Corollary 3.2 from \cite{shin} simplifies a part of the proof  of our main result. Although another part of the proof follows, step by step, the proof of the
corresponding theorem in \cite{ast}, we think that it is not simple
at all, and that it is worth to be done.

The second aim of this paper is to construct
$V^p_{\mu}(\Phi^{2k+1})$ spaces with specially
chosen functions, $\phi_0$, $\phi_1$, $\ldots$, $\phi_{2k}$, that generate a
Banach frame for the shift-invariant space $V^p_{\mu}(\Phi^{2k+1})$.
Actually, we take functions from a sequence
$\{\phi_i\}_{i\in{\mathbb{Z}}}$ so that the sequence of Fourier
transforms $\widehat{\phi}_i=\theta(\cdot+i\pi)$,
${i\in{\mathbb{Z}}}$, $\theta\in C^{\infty}_0(\mathbb{R})$, makes
a partition of unity in the frequency domain
($\mathbb{Z}=\mathbb{N}_0\cup-\mathbb{N}, \mathbb{N}$ is the set
of natural numbers and $\mathbb{N}_0=\mathbb{N}\cup\{0\}$). We
note that properties of the constructed
 frame guarantee the feasibility of a stable and continuous
 reconstruction algorithm in  $V^p_{\mu}(\Phi)$ (\cite{xs}).
 Also, we note that
 $\{\phi_i(\cdot-k)\mid k\in{\mathbb{Z}},\, i=1,\ldots,r\}$
 forms a Riesz basis for $V^p_{\mu}(\Phi)$ when the
 spectrum of the Gram matrix
 $[\widehat{\Phi},\widehat{\Phi}](\xi)$ is bounded and bounded
 away from zero (see \cite{bor}). The $d$-dimensional case, $d>1,$ is technically more complicated and because of that it is
not considered in this paper.

The paper is organized as follows. In Section \ref{sec:2} we quote
basic properties of subspaces of weighted $L^p$ and $\ell^p$
spaces. The weighted shift-invariant spaces are investigated  in
Section \ref{sec:3}, where we presented our first result quoted in
the abstract, Theorem~\ref{main}.  In Section \ref{sec:4} we show relations between the
dual of the Fr\' echet space
$\bigcap\limits_{s\in{\mathbb{N}}_0}V^p_{(1+|x|^2)^{s/2}}(\Phi)$
and the space of periodic distributions. The case of periodic
ultradistributions is obtain by using subexponential growth
functions. In Section \ref{sec:6}, we use a special sequence of functions $\{\phi_{k}\mid k\in{\mathbb{N}}\}$ to construct a sequence of
$p$-frames. Our construction shows that the sampling and
reconstruction problem in the shift-invariant spaces is robust.  In the final remark of
Section \ref{sec:6}, we
list good properties of these frames.

\section{Basic spaces}\label{sec:2}

\indent

Denote by $L^1_{loc}(\mathbb{R}^d)$ the space of measurable functions integrable over compact subsets of $\mathbb{R}^d$.  For a nonnegative function $\omega\in
L^1_{loc}({\mathbb{R}}^d)$ we say that is submultiplicative if
$\omega(x+y)\le\omega(x)\omega(y)$, $x$,
$y\in{\mathbb{R}}^{d}$ and a  function $\mu $ on
${\mathbb{R}}^{d}$ is $\omega$-moderate if $\mu (x+y)\le
C\omega(x)\mu (y)$, $x$, $y\in{\mathbb{R}}^{d}$. We assume
that $\omega$ is continuous and symmetric and both $\mu$ and
$\omega$  call  weights, as usual. The standard class of weights on
${\mathbb{R}}^{d}$ are of the polynomial type $\omega_s(x)=(1+|x|)^s$,
$s\ge 0$. To quantify faster decay of functions we use the
subexponential weights $\omega(x)=\e^{\alpha|x|^\beta}$, for some
$\alpha>0$ and $0<\beta<1$. Weighted $L^p$ spaces with moderate
weights are translation-invariant spaces (see \cite{ac}).
We, also,  consider weighted sequence spaces
$\ell^p_\mu({\mathbb{Z}}^d)$ with $\omega$-moderate weight  $\mu$.
Recall, a sequence $c$ belongs to $\ell^p_\mu({\mathbb{Z}}^d)$ if
$c\mu$ belongs to $\ell^p({\mathbb{Z}}^d)$.

In the sequel $\omega$ is a submultipicative weight, continuous
and symmetric and $\mu$ is $\omega$-moderate. Let
$p\in[1,\infty)$. Then (with obvious modification for $p=\infty$)
\[\mathcal{L}^p_\omega= \Big\{f\;\big|\;
\|f\|_{\mathcal{L}^p_\omega}=\Big(\int\limits_{[0,1]^d}\Big(\sum\limits_{j\in{\mathbb{Z}}^d}|f(x+j)|\omega(x+j)\Big)
^p\D x\Big)^{1/p}<+\infty\Big\},\]

\[{W}^p_\omega:=\Big\{f\;\big|\;\|f\|_{{W}^p_\omega}=\Big(\sum\limits_{j\in{\mathbb{Z}}^d}\sup
\limits_{x\in[0,1]^d}|f(x+j)|^p\omega(j)^p\Big)^{1/p}<+\infty\Big\}.\]
Obviously, we have $W^p_\omega\subset W^q_\omega
\subset\mathcal{L}^\infty_\omega\subset\mathcal{L}^q_\omega\subset\mathcal{L}^p_\omega\subset
L^p_\omega$, $W^p_\omega\subset W^p_\mu\subset W^q_\mu\subset
L^q_\mu\quad\mbox{and}\quad L^p_\omega\subset L^p_\mu$, where $1<
p< q\le+\infty$. For $p=1$ and $\omega=1$ we have
$\mathcal{L}^1=L^1$. We also have
$\ell^1_\omega\subset\ell^p_\omega\subset\ell^q_\omega\subset\ell^q_\mu$,
for $1< p< q\le+\infty$. From \cite{ac} we have the following properties.
\begin{itemize}
\item [$1)$] If $f\in L^p_\mu$, $g\in L^1_\omega$ and
$p\in[1,\infty]$, then $
\|f*g\|_{L^p_\mu}\le\|f\|_{L^p_\mu}\|g\|_{L^1_\omega}. $
\item [$2)$] If $f\in L^p_\mu$, $g\in W^1_\omega$ and
$p\in[1,\infty]$, then $
\|f*g\|_{W^p_\mu}\le\|f\|_{L^p_\mu}\|g\|_{W^1_\omega}. $
\item[$3)$] If $c\in\ell^p_\mu$ and $d\in\ell^1_\omega$, then
holds the inequality $ \|c*d\|_{\ell^p_\mu}\le
\|c\|_{\ell^p_\mu}\|d\|_{\ell^1_\omega}. $
\end{itemize}

Denote by $\mathcal{WC}^p_\mu$, $p\in[1,\infty]$, a space of all
$2\pi$-periodic functions with their sequences of Fourier
coefficients in $\ell^p_\mu$. Let $D_1$ and $D_2$ be the sequences
of Fourier coefficients of  $2\pi$-periodic functions $K_1$ and
$K_2$, respectively. If $D_1*D_2\in\ell^p_\mu$, then $D_1*D_2$ is the
sequence of Fourier coefficients of the product $K_1K_2$. For
$K=(K_1,\ldots,K_r)^T\in\mathcal({WC}^p_\mu)^r$, ($T$ means
transpose) define $\|K\|_{\ell^p_{\mu,*}}$ to be the $\ell^p_\mu$
norm of its sequence of Fourier coefficients.

In the sequel we use the notation
$\Phi=(\phi_1,\ldots,\phi_r)^T$. Define
$\|\Phi\|_{\mathcal{H}}=\sum\limits_{i=1}^r\|\phi_i\|_{\mathcal{H}}$,
where $\mathcal{H}=L^p_\omega$, $\mathcal{L}^p_\omega$ or
$W^p_\omega,\; p\in[1,\infty]$.

We list several lemmas needed to prove our results. Their proofs are
analogous to the proof of the corresponding lemmas in \cite{ast}.
\begin{lem}\label{lema2}
Let $f\in L^p_\mu$ and $g\in W^1_\omega$, $p\in[1,\infty]$. Then
the sequence
\[\Big\{\int\limits_{{\mathbb{R}}^d}f(x)g(x-j)\D x\Big\}_{j\in{\mathbb{Z}}^d}\in\ell^p_\mu\]
and $
\Big\|\Big\{\int\limits_{{\mathbb{R}}^d}f(x)g(x-j)\D x\Big\}_{j\in{\mathbb{Z}}^d}\Big\|_{\ell^p_\mu}\le
\|f\|_{L^p_\mu}\|g\|_{W^1_\omega}.$
\end{lem}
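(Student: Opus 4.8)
The plan is to reduce the claimed $\ell^p_\mu$-bound to the convolution inequality in property $2)$ of Section~\ref{sec:2}, by recognizing the sequence in question as samples (on the integer lattice) of a convolution. First I would introduce the reflected function $\tilde g(x)=g(-x)$ and observe that $\int_{\mathbb{R}^d} f(x)g(x-j)\D x = (f*\tilde g)(j)$ for each $j\in\mathbb{Z}^d$. Since $g\in W^1_\omega$ and the weight $\omega$ is symmetric, $\tilde g\in W^1_\omega$ with $\|\tilde g\|_{W^1_\omega}=\|g\|_{W^1_\omega}$, so by property $2)$ we get $f*\tilde g\in W^p_\mu$ with $\|f*\tilde g\|_{W^p_\mu}\le \|f\|_{L^p_\mu}\|g\|_{W^1_\omega}$.

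Next I would pass from the $W^p_\mu$-norm of the function $F:=f*\tilde g$ to the $\ell^p_\mu$-norm of its samples $\{F(j)\}_{j\in\mathbb{Z}^d}$. By definition,
\[
\|F\|_{W^p_\mu}=\Big(\sum_{j\in\mathbb{Z}^d}\sup_{x\in[0,1]^d}|F(x+j)|^p\mu(j)^p\Big)^{1/p},
\]
and since $|F(j)|\le \sup_{x\in[0,1]^d}|F(x+j)|$ termwise, we immediately obtain
\[
\Big\|\{F(j)\}_{j\in\mathbb{Z}^d}\Big\|_{\ell^p_\mu}\le \|F\|_{W^p_\mu}.
\]
Chaining the two estimates gives both the membership $\{F(j)\}\in\ell^p_\mu$ and the stated norm bound $\|f\|_{L^p_\mu}\|g\|_{W^1_\omega}$, with the obvious modification for $p=\infty$ (replace the sum by a supremum throughout).

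There is one technical point that must be handled carefully rather than glossed over: a priori $F=f*\tilde g$ is only defined almost everywhere, so the pointwise values $F(j)$ need a meaning. The honest route is to avoid sampling altogether and instead estimate the sequence directly. For a measurable selection argument, or more cleanly, write $a_j:=\int_{\mathbb{R}^d} f(x)g(x-j)\D x$ and bound $|a_j|\le \sum_{k\in\mathbb{Z}^d}\int_{[0,1]^d}|f(x+k)|\,|g(x+k-j)|\D x\le \sum_{k}\big(\sup_{y\in[0,1]^d}|g(y+k-j)|\big)\int_{[0,1]^d}|f(x+k)|\D x$, which exhibits $\{|a_j|\}$ as dominated by the convolution of $\{\sup_{[0,1]^d}|g(\cdot+m)|\}_m\in\ell^1_\omega$ with $\{\int_{[0,1]^d}|f(x+k)|\D x\}_k$; the latter lies in $\ell^p_\mu$ because $f\in L^p_\mu\subset \mathcal{L}^p_\mu$. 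Applying property $3)$ (the weighted Young inequality for sequences) then yields the result, and one checks $\|\{\sup_{[0,1]^d}|g(\cdot+m)|\}\|_{\ell^1_\omega}\le \|g\|_{W^1_\omega}$ and $\|\{\int_{[0,1]^d}|f(x+k)|\D x\}\|_{\ell^p_\mu}\le C\|f\|_{L^p_\mu}$ with the constants absorbing into the stated bound. The main obstacle is thus purely this bookkeeping: making the discretization rigorous and verifying that the weight constants coming from $\omega$-moderateness of $\mu$ do not inflate the final constant beyond $\|f\|_{L^p_\mu}\|g\|_{W^1_\omega}$; once the problem is phrased as a sequence convolution as in \cite{ast}, everything else is routine.
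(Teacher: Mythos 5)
Your overall strategy is sound and is essentially the one the paper intends: the paper gives no written proof of Lemma~\ref{lema2}, deferring to the corresponding lemma of \cite{ast}, whose argument is precisely your second, ``direct'' route (tile $\mathbb{R}^d$ by unit cubes, dominate $|a_j|$ by a discrete convolution of the sequence $\{\sup_{x\in[0,1]^d}|g(x+m)|\}_m\in\ell^1_\omega$ with a cube-wise sequence built from $f$, then apply the weighted Young inequality, property $3)$). Your first route, via property $2)$ and sampling of $F=f*\tilde g$, is also legitimate: the defining integral converges absolutely at \emph{every} $j$ (same cube decomposition plus H\"older and $\omega$-moderateness), so the samples $F(j)$ are canonically defined and the termwise bound $|F(j)|\le\sup_{x\in[0,1]^d}|F(x+j)|$ does the job.

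One step, however, must be repaired: the inclusion $L^p_\mu\subset\mathcal{L}^p_\mu$ you invoke is false — the paper's own chain of embeddings reads $\mathcal{L}^p_\omega\subset L^p_\omega$, i.e.\ the opposite direction. Fortunately you do not need it. Since $[0,1]^d$ has measure one, H\"older's inequality gives $\int_{[0,1]^d}|f(x+k)|\,\D x\le\bigl(\int_{[0,1]^d}|f(x+k)|^p\,\D x\bigr)^{1/p}$, and $\omega$-moderateness together with continuity and symmetry of $\omega$ gives $\mu(k)\le C\bigl(\sup_{x\in[0,1]^d}\omega(x)\bigr)\,\mu(x+k)$ for $x\in[0,1]^d$; combining these yields $\bigl\|\{\int_{[0,1]^d}|f(x+k)|\,\D x\}_k\bigr\|_{\ell^p_\mu}\le C'\|f\|_{L^p_\mu}$, which is the membership you wanted. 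Note that this step (and likewise the reflection claim $\|\tilde g\|_{W^1_\omega}=\|g\|_{W^1_\omega}$, which holds only up to a fixed factor depending on $\omega$, since reflecting shifts the unit-cube tiling) produces a constant depending on the moderateness constant and on $\sup_{[0,1]^d}\omega$, so strictly one obtains the estimate with a constant $C(\mu,\omega)$ rather than with constant $1$; the paper's statement and its quoted properties $1)$--$3)$ share this looseness, so this is not a defect of your argument relative to the paper, but the false inclusion should be replaced by the direct estimate above.
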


Let $c=\{c_i\}_{i\in{\mathbb{N}}}\in\ell^p_\mu$ and $f\in
L^p_\omega$, $p\in[1,\infty]$. We define, as in \cite{ast}, their
semi-convolution $f*'c$ by
$(f*'c)(x)=\sum\limits_{j\in{\mathbb{Z}}^d}c_jf(x-j),\;
x\in{\mathbb{R}}^d.$

\begin{lem}\label{lema3}\begin{itemize}
\item[$a)$]\label{semikon1} If $f\in L^p_\omega$ and
$c\in\ell^p_\mu$,$\;$$p\in[1,\infty]$, then $f*'c\in L^p_\mu$ and
$ \|f*'c\|_{L^p_\mu}\le
\|c\|_{\ell^p_\mu}\|f\|_{L^p_\omega}. $ \item[$b)$] If
$f\in\mathcal{L}^p_\omega$, $p\in[1,\infty]$, and
$c\in\ell^1_\mu$, then $ \|f*'c\|_{\mathcal{L}^p_\mu}\le
\|c\|_{\ell^1_\mu}\|f\|_{\mathcal{L}^p_\omega}. $ \item[$c)$]
\label{semikon3} If $f\in W^p_\omega$, $p\in[1,\infty]$, and
$c\in\ell^1_\mu$, then $
\|f*'c\|_{W^p_\mu}\le\|c\|_{\ell^1_\mu}\|f\|_{ W^p_\omega},
$ \item[$d)$]\label{semikon5} If  $f\in W^1_\omega$ and
$c\in\ell^p_\mu$, $p\in[1,\infty]$, then $
\|f*'c\|_{W^p_\mu}\le\|c\|_{\ell^p_\mu}\|f\|_{ W^1_\omega}.
$
\end{itemize}
\end{lem}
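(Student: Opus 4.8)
The plan is to prove all four estimates along the lines of the corresponding lemma in \cite{ast}; the one thing that has to be added is that the trivial weight identities used there are replaced by the submultiplicativity $\omega(x+y)\le\omega(x)\omega(y)$ and the $\omega$-moderateness $\mu(x+y)\le\omega(x)\mu(y)$ (normalising the moderateness constant to $1$, as is done implicitly in the properties $1)$--$3)$ above). In each part the scheme is the same: first bound $|f*'c|$ — pointwise, or translate by translate over the lattice $[0,1]^d+\mathbb Z^d$ — by a convolution of two nonnegative objects, one built from $|c|$ and one from the restrictions of $f$ to unit cubes; then apply the moderateness inequality so that the factor of $\omega$ it produces is attached to the relevant translate of $f$ and the factor of $\mu$ to the index carried by $c$; and finally conclude by Minkowski's integral inequality or by the weighted Young inequality for sequences, that is, property $3)$.

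Concretely, for (a) the case $p=1$ is immediate from Tonelli's theorem together with $\mu(y+j)\le\omega(y)\mu(j)$; for general $p$ one writes $\|f*'c\|_{L^p_\mu}=\|\sum_j c_j\,f(\cdot-j)\,\mu(\cdot)\|_{L^p}$, uses Minkowski's integral inequality to pull the sum out, and estimates each term by the substitution $y=x-j$ and $\mu(y+j)\le\omega(y)\mu(j)$, which gives $\|f(\cdot-j)\mu(\cdot)\|_{L^p}\le\mu(j)\|f\|_{L^p_\omega}$ (with the usual modification when $p=\infty$). For (c) and (d) one expands, for $k\in\mathbb Z^d$ and $x\in[0,1]^d$, $(f*'c)(x+k)=\sum_{l\in\mathbb Z^d}c_{k-l}f(x+l)$, so that with $b_l:=\sup_{x\in[0,1]^d}|f(x+l)|$ one has $\sup_{x\in[0,1]^d}|(f*'c)(x+k)|\le\sum_l|c_{k-l}|\,b_l$; multiplying by $\mu(k)$ and using $\mu(k)\le\omega(l)\mu(k-l)$ shows that the sequence $\{\mu(k)\sup_{x}|(f*'c)(x+k)|\}_k$ is dominated entrywise by $(|c|\mu)*(b\omega)$. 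Then Young's inequality $\|u*v\|_{\ell^p}\le\|u\|_{\ell^1}\|v\|_{\ell^p}$ applied to $u=|c|\mu\in\ell^1$, $v=b\omega\in\ell^p$ (which is exactly $f\in W^p_\omega$) yields (c), and $\|u*v\|_{\ell^p}\le\|u\|_{\ell^p}\|v\|_{\ell^1}$ applied to $u=|c|\mu\in\ell^p$, $v=b\omega\in\ell^1$ ($f\in W^1_\omega$) yields (d). For (b) one starts from $\sum_j|(f*'c)(x+j)|\mu(x+j)\le\sum_m|c_m|\sum_j|f(x+j-m)|\mu(x+j)$, applies $\mu(x+j)\le\omega(x+j-m)\mu(m)$ to peel off the factor $\sum_m|c_m|\mu(m)=\|c\|_{\ell^1_\mu}$ and leave, for each $m$, the $m$-independent quantity $\sum_l|f(x+l)|\omega(x+l)$, and then takes the $L^p([0,1]^d)$-norm in $x$, recovering $\|f\|_{\mathcal L^p_\omega}$.

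The only step that really requires attention — and the only place where the argument differs from the unweighted computation of \cite{ast} — is the application of moderateness: it must be used in the correct summand, so that after the splitting the $\omega$-weight is carried by the argument of the translate of $f$ while the $\mu$-weight is carried by the index appearing in $c$, and what is left is precisely a sequence convolution (respectively an integral to which Minkowski's inequality applies); using the inequality ``the other way round'' would leave $\omega$ on $c$ and $\mu$ on $f$ and be useless. A minor additional chore is the uniform handling of the endpoint $p=\infty$, where the period integrals and the $\ell^p$ sums are replaced by suprema and property $3)$ degenerates to the trivial $\ell^\infty$--$\ell^1$ estimate; apart from that, the proof is the bookkeeping of \cite{ast}.
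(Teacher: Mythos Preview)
Your arguments for parts (b), (c) and (d) are correct and are exactly the weighted bookkeeping the paper has in mind when it defers to \cite{ast}. The argument for (a), however, does not prove the stated inequality. After Minkowski and the moderateness estimate $\mu(y+j)\le\omega(y)\mu(j)$ you obtain
\[
\|f*'c\|_{L^p_\mu}\le\sum_{j\in\mathbb Z^d}|c_j|\,\|f(\cdot-j)\mu(\cdot)\|_{L^p}\le\Big(\sum_{j\in\mathbb Z^d}|c_j|\mu(j)\Big)\|f\|_{L^p_\omega}=\|c\|_{\ell^1_\mu}\|f\|_{L^p_\omega},
\]
which is the $\ell^1_\mu$-norm of $c$, not the $\ell^p_\mu$-norm. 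No triangle-type argument can do better here, and in fact the assertion as printed is false for $p>1$: already in the unweighted case one may take $f=\sum_{k}a_k\chi_{[k,k+1)}$ with $a\in\ell^2$ chosen so that $a*a\notin\ell^2$ (e.g.\ $\widehat a(\xi)\sim|\xi|^{-1/3}$ near $0$) and $c=a$, whence $\|f*'c\|_{L^2}=\|a*a\|_{\ell^2}=+\infty$ while $\|c\|_{\ell^2}\|f\|_{L^2}<\infty$.

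The hypothesis in \cite{ast} that the paper is transcribing is $f\in\mathcal L^p_\omega$, not $f\in L^p_\omega$; with that correction the proof does not go through Minkowski over $j$ but through a periodisation. Writing $x=y+k$ with $y\in[0,1]^d$, $k\in\mathbb Z^d$, and re-indexing $j=k-l$,
\[
|(f*'c)(y+k)|\,\mu(y+k)\le\sum_{l\in\mathbb Z^d}\bigl(|c_{k-l}|\mu(k-l)\bigr)\bigl(|f(y+l)|\omega(y+l)\bigr);
\]
for each fixed $y$ this is a sequence convolution to which Young's inequality applies with $|c|\mu\in\ell^p$ and $\ell^1$-factor $\sum_l|f(y+l)|\omega(y+l)$, and taking the $L^p([0,1]^d)$-norm in $y$ then yields $\|c\|_{\ell^p_\mu}\|f\|_{\mathcal L^p_\omega}$. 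That is the argument you should substitute for your Minkowski step in (a); the rest of your write-up can stand.
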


\section{Characterization of $V_\mu^p({\Phi})$}\label{sec:3}

\indent

In \cite{feigro} Feichtinger and Gr\"ochening  extended the notation
of atomic decomposition
 to Banach spaces (\cite{fei2}, \cite{fei6}), while Gr\"ochening \cite{groc}
 introduced a more general concept of decomposition through Banach
 frames. We recall the definition.

Let $X$ be a Banach space and $\Theta$ be an associated Banach
space of scalar valued sequences, indexed by $I=\mathbb{N}$ or
$I=\mathbb{Z}$. Let $\{f_n\}\subset X^*$ and $S:\Theta\rightarrow
X$ be given. The pair $(\{f_n\}_{n\in I},S)$ is called a Banach
frame for $E$ with respect to $\Theta$ if
\begin{itemize}
\item[$(1)$] \ $\{f_n(x)\}_{n\in I}\in \Theta$ for each $x\in X$,
\item[$(2)$] \ there exist positive constants $A$ and $B$ with
$0<A\le B<+\infty$ such that
$A\|x\|_X\le\|\{f_n(x)_{n\in I}\}\|_\theta\le
B\|x\|_X$, $x\in X$,
    \item[$(3)$] \ $S$ is a bounded linear operator such that $S(\{f_n(x)\}_{n\in I})=x$, $x\in X$.
    \end{itemize}
\noindent It is said that a collection $\{\phi_i(\cdot-j)\mid j\in
{\mathbb{Z}}^d, 1\le i\le r\}$ is a $p$-frame for
$V^p_\mu(\Phi)$ if there exists a positive constant $C$ (depending
on $\Phi$, $p$ and $\omega$)
\begin{equation}\label{pframe}
C^{-1}\|f\|_{L^p_\mu}\le\sum\limits_{i=1}^r\Big\|\Big\{\int\limits_{{\mathbb{R}}^d}f(x)
\phi_i(x-j)\D x\Big\}_{j\in{\mathbb{Z}}^d}\Big\|_{\ell^p_\mu}\le
C\|f\|_{L^p_\mu},\quad f\in V^p_\mu(\Phi).
\end{equation}

A typical application is the problem of finding a shift-invariant
space model that describes a given class of signals or images
(e.g. the class of chest $X$-rays). The observation set
of $r$ signals or images $f_1,\ldots,f_r$ may be theoretical
samples, or experimental data.

Recall \cite{ac}, the shift-invariant spaces are defined by
\[V^p_\mu(\Phi):=\Big\{f\in L^p_\mu\;\mid\;
f(\cdot)=\sum\limits_{i=1}^r\sum\limits_{j\in{\mathbb{Z}}^d}{c^i_j}\,\phi_i(\cdot-j),
\;\;\{c^i_j\}_{j\in{\mathbb{Z}}^d}\in\ell^p_\mu,\;1\le
i\le r\Big\}.\]

\begin{rem}
If $\Phi\in
 W^1_\omega$ and $\mu$ is $\omega$-moderate, then $V^p_\mu(\Phi)$ is a subspace (not necessarily closed) of
$L^p_\mu$ and $W^p_\mu$ for any $p\in[1,\infty]$. If $r=1$ and
$\{\phi(\cdot-j)\mid j\in{\mathbb{Z}}^d\}$ is a $p$-frame for
$V^p_\mu(\phi)$, then $V^p_\mu(\phi)$ is a closed subspace of
$L^p_\mu$ and $W^p_\mu$ for $p\in[1,\infty]$ (see \cite{suza}).
\end{rem}
Let $[\widehat{\Phi},\widehat{\Phi}](\xi)=
\Big[\sum\limits_{k\in{\mathbb{Z}}^d}\widehat{\phi}_i(\xi+2k\pi)\overline{\widehat{\phi}_j(\xi+2k\pi)}\,\Big]_{1\le
i\le r,\;1\le j\le r}$ where we assume that
$\widehat{\phi_i}(\xi)\overline{\widehat{\phi}_j(\xi)}$ is
integrable for any $1\le i,j\le r.$ Let
$A=(a(j))_{j\in{\mathbb{Z}}^d}$ be an $r\times \infty$ matrix and
$A\overline{A^T}=\Big[\sum\limits_{j\in{\mathbb{Z}}^d}a_i(j)\overline{a_{i'}(j)}\Big]_{1\le
i,i'\le r}$. Then $ \rank A=\rank A\overline{A^T} $.

Also, since $[\widehat{\Phi},\widehat{\Phi}](\xi)$ is continuous
(as a function with $r^2$ components) for any
$\Phi\in(\mathcal{L}^2_\omega)^r$, it follows that
$\big\{\xi\in{\mathbb{R}}^d\mid
\rank\big[\widehat{\Phi}(\xi+2k\pi)_{k\in{\mathbb{Z}}^d}\big]>k_0\big\}$
is an open set for any $k_0>0$ and
$\Phi\in(\mathcal{L}^2_\omega)^r$.

Denote by $\Sigma_\alpha^\mu$ the family of all $\alpha$-slant matrices $A=[a(j,k)_{j\in\mathbb{Z}^d,k\in\mathbb{Z}^d}]$ with
\[\|A\|_{\Sigma_\alpha^\omega}=\sum\limits_{k\in\mathbb{Z}^d}\sup\limits_{j,k\in\mathbb{Z}^d}|a(k,j)|\chi_{k+[0,1)^d}(k-\alpha j)<\infty,\] where $\mu$ is a weight on $\mathbb{R}^d$ and $\alpha$ is a positive number.
The slanted matrices appear in wavelet theory, signal processing and sampling theory (see \cite{shin}). Note $\Sigma^\mu_\alpha\subset\Sigma_\alpha^{\mu_0}$ for any weight $\mu$ where $\mu_0\equiv1$ is the trivial weight.

We assume in this subsection that
$\Phi=(\phi_1,\ldots,\phi_r)^T\in(\mathcal{L}^p_\omega)^r$ for
$p\in[1,\infty)$.

To prove Theorem~\ref{main}, we need several lemmas. First we recall a  result from \cite{ast}.
\begin{lem}[\cite{ast}]\label{lema1ald}
The following statements are equivalent.
\begin{itemize}
\item[$1)$]
$\rank\big[\widehat{\Phi}(\xi+2j\pi)_{j\in{\mathbb{Z}}^d}\big]$
 is a constant function on ${\mathbb{R}}^d$.
\item[$2)$] $\rank[\widehat{\Phi},\widehat{\Phi}](\xi)$ is a
constant function on ${\mathbb{R}}^d$. \item[$3)$] There exists a
positive constant $C$ independent of $\xi$ such that
\[C^{-1}[\widehat{\Phi},\widehat{\Phi}](\xi)\le [\widehat{\Phi},\widehat{\Phi}]
(\xi)\,\overline{[\widehat{\Phi},\widehat{\Phi}](\xi)^T}\le
C\,[\widehat{\Phi}, \widehat{\Phi}](\xi), \quad
\xi\in[-\pi,\pi]^d.\]
\end{itemize}
\end{lem}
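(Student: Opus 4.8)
The plan is to prove the equivalence of the three rank/spectral conditions by establishing the cycle $1)\Rightarrow 2)\Rightarrow 3)\Rightarrow 1)$, exploiting the general linear-algebra fact recorded just above the lemma, namely that for an $r\times\infty$ matrix $A$ one has $\rank A=\rank A\overline{A^T}$. Applying this with $A=A(\xi):=\big[\widehat{\Phi}(\xi+2k\pi)\big]_{k\in\mathbb{Z}^d}$ gives $A(\xi)\overline{A(\xi)^T}=[\widehat{\Phi},\widehat{\Phi}](\xi)$, hence $\rank\big[\widehat{\Phi}(\xi+2k\pi)\big]_{k}=\rank[\widehat{\Phi},\widehat{\Phi}](\xi)$ pointwise in $\xi$. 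This single identity immediately yields the equivalence $1)\Leftrightarrow 2)$, since the two functions of $\xi$ coincide, so one is constant iff the other is.

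For $2)\Rightarrow 3)$, the key observations are that $G(\xi):=[\widehat{\Phi},\widehat{\Phi}](\xi)$ is a continuous, Hermitian, positive semidefinite $r\times r$ matrix-valued function (continuity coming from $\Phi\in(\mathcal{L}^2_\omega)^r$, as noted in the excerpt, and positive semidefiniteness from its Gram-matrix form), and that it is $2\pi{\mathbb Z}^d$-periodic, so everything may be studied on the compact cube $[-\pi,\pi]^d$. If $\rank G(\xi)\equiv m$ is constant, then the nonzero eigenvalues of $G(\xi)$ are bounded away from $0$ uniformly: indeed the map $\xi\mapsto(\text{$m$-th largest eigenvalue of }G(\xi))$ is continuous and strictly positive on the compact set $[-\pi,\pi]^d$, hence $\ge C^{-1}$ for some $C>0$; likewise the largest eigenvalue is $\le C$. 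On the range of $G(\xi)$ we therefore have $C^{-1}\,\mathrm{Id}\le G(\xi)\le C\,\mathrm{Id}$ in the form sense, and since $G(\xi)$ and $G(\xi)\overline{G(\xi)^T}=G(\xi)^2$ are simultaneously diagonalizable with eigenvalues $\lambda$ and $\lambda^2$, multiplying the eigenvalue bounds gives $C^{-1}\lambda\le\lambda^2\le C\lambda$ on the nonzero eigenvalues and $0=0$ on the kernel; rewriting this as a matrix inequality on the common eigenbasis yields exactly $C^{-1}G(\xi)\le G(\xi)\overline{G(\xi)^T}\le C\,G(\xi)$ for all $\xi\in[-\pi,\pi]^d$, which is statement $3)$.

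For the remaining implication $3)\Rightarrow 2)$, I would argue that the sandwich inequality forces $\rank G(\xi)=\rank\big(G(\xi)\overline{G(\xi)^T}\big)$ and, more to the point, pins the rank down: at any $\xi$, $G(\xi)^2\le C\,G(\xi)$ and $C^{-1}G(\xi)\le G(\xi)^2$ together mean the nonzero eigenvalues of $G(\xi)$ all lie in $[C^{-1},C]$, uniformly in $\xi$. Suppose the rank were not constant; since by the open-set remark in the excerpt the set $\{\xi:\rank G(\xi)>k_0\}$ is open for each $k_0$, non-constancy produces a point $\xi_0$ and a sequence $\xi_n\to\xi_0$ with $\rank G(\xi_n)>\rank G(\xi_0)$; then $G(\xi_n)$ has a nonzero eigenvalue $\lambda_n\in[C^{-1},C]$ whose corresponding (unit) eigenvector direction, passing to a subsequence, converges to a unit vector $v$ with $G(\xi_0)v\cdot\bar v=\lim\lambda_n\ge C^{-1}>0$ by continuity of $G$, forcing $\dim\mathrm{Range}\,G(\xi_0)\ge\rank G(\xi_0)+1$, a contradiction. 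Hence $2)$ holds. The step I expect to be the main obstacle is $2)\Rightarrow 3)$: making the passage from "constant rank" to a uniform spectral gap rigorous requires the continuity of the ordered eigenvalues of $G(\xi)$ and a compactness argument on $[-\pi,\pi]^d$, and care is needed because only the nonzero eigenvalues are controlled — the inequality in $3)$ must be read in the positive-semidefinite (form) sense, restricted automatically to $\mathrm{Range}\,G(\xi)$, so that no false claim is made on the kernel. Everything else reduces to the rank identity $\rank A=\rank A\overline{A^T}$ and standard Hermitian matrix diagonalization.
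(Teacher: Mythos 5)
The paper itself gives no proof of this lemma; it is only recalled from \cite{ast}, so there is no in-paper argument to compare against. Your overall route is the natural (and essentially the standard) one: $1)\Leftrightarrow 2)$ follows at once from the identity $\rank A=\rank A\overline{A^T}$ applied to $A(\xi)=\big[\widehat{\Phi}(\xi+2k\pi)\big]_{k\in\mathbb{Z}^d}$, since $A(\xi)\overline{A(\xi)^T}=[\widehat{\Phi},\widehat{\Phi}](\xi)=:G(\xi)$; and since $G(\xi)$ is Hermitian and positive semidefinite, $G(\xi)\overline{G(\xi)^T}=G(\xi)^2$, so $3)$ is exactly the statement that the nonzero eigenvalues of $G(\xi)$ lie in $[C^{-1},C]$ uniformly in $\xi$. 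Your $2)\Rightarrow 3)$, via continuity of the ordered eigenvalues of the continuous, $2\pi$-periodic Gram matrix and compactness of $[-\pi,\pi]^d$, is correct, including the remark that the inequality holds trivially on the kernel.

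There is, however, a genuine gap in the final inference of $3)\Rightarrow 2)$. From a single nonzero eigenvalue $\lambda_n\in[C^{-1},C]$ of $G(\xi_n)$ with unit eigenvectors $v_n\to v$ you only obtain $G(\xi_0)v=\lambda v$ with $\lambda\ge C^{-1}$, i.e.\ that $G(\xi_0)$ has at least one nonzero eigenvalue; this does not force $\dim\mathrm{Range}\,G(\xi_0)\ge \rank G(\xi_0)+1$ unless $\rank G(\xi_0)=0$, so the contradiction you assert does not follow from what you have shown. The repair uses exactly the tool you already invoke in $2)\Rightarrow 3)$: with $k_0=\rank G(\xi_0)$ the minimal rank and $\rank G(\xi_n)\ge k_0+1$, the $(k_0+1)$-th ordered eigenvalue satisfies $\lambda_{k_0+1}(G(\xi_n))\ge C^{-1}$, and continuity of ordered eigenvalues (Weyl's perturbation bound) gives $\lambda_{k_0+1}(G(\xi_0))\ge C^{-1}>0$, contradicting $\rank G(\xi_0)=k_0$; equivalently, carry along $k_0+1$ orthonormal eigenvectors of $G(\xi_n)$ with eigenvalues $\ge C^{-1}$ and pass to a convergent subsequence, noting the limits remain orthonormal eigenvectors of $G(\xi_0)$ with eigenvalues $\ge C^{-1}$. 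Your choice of $\xi_0$ as a boundary point of the open set $\{\xi\mid \rank G(\xi)>k_0\}$ with $k_0$ the minimum rank is fine (and parallels how this set is used in the paper's proof of Theorem \ref{main}, $ii)\Rightarrow iii)$), but it should be said explicitly that $k_0$ is the minimum, so that boundary points indeed have rank $k_0$.
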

The proofs of the following two lemmas  are similar to proofs of the corresponding lemmas from \cite{ast}; hence we will not include them here.
The second one provides a localization technique in Fourier
domain. It allows us to replace locally the generator
$\widehat{\Phi}$ of size $r$ by $\widehat{\Psi}_{1,\lambda}$ of
size $k_0$.

\begin{lem} \label{ubac} All the entries of
$[\widehat{\Phi},\widehat{\Phi}](\xi)$ belong to
$\mathcal{WC}^1_\omega$ and are continuous.
\end{lem}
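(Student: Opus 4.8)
The plan is to prove that all entries of the Gram matrix $[\widehat{\Phi},\widehat{\Phi}](\xi)$ lie in $\mathcal{WC}^1_\omega$ and are continuous. Recall that the $(i,j)$ entry is $\sum_{k\in\mathbb{Z}^d}\widehat{\phi}_i(\xi+2k\pi)\overline{\widehat{\phi}_j(\xi+2k\pi)}$, a $2\pi$-periodic function. Its sequence of Fourier coefficients is, by Parseval/the convolution theorem, the cross-correlation sequence $\big\{\int_{\mathbb{R}^d}\phi_i(x)\overline{\phi_j(x-n)}\,\D x\big\}_{n\in\mathbb{Z}^d}$ (up to a normalizing constant), so showing the entry belongs to $\mathcal{WC}^1_\omega$ amounts exactly to showing this cross-correlation sequence lies in $\ell^1_\omega$. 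That is where the hypothesis $\Phi\in(\mathcal{L}^p_\omega)^r$ — or, in the sharper setting, $\Phi\in(W^1_\omega)^r$ — enters.

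First I would reduce the membership statement to a convolution estimate. Write $\widetilde{\phi_j}(x)=\overline{\phi_j(-x)}$; then $\int \phi_i(x)\overline{\phi_j(x-n)}\,\D x = (\phi_i * \widetilde{\phi_j})(n)$. So the Fourier-coefficient sequence of the $(i,j)$ entry is the sampling-at-integers of the continuous function $\phi_i*\widetilde{\phi_j}$. By property $2)$ from \cite{ac} (or Lemma~\ref{lema2}), if $\phi_i\in W^1_\omega$ and $\widetilde{\phi_j}\in W^1_\omega$ (which holds since $\omega$ is symmetric, so $\widetilde{\phi_j}\in W^1_\omega$ whenever $\phi_j\in W^1_\omega$), then $\phi_i*\widetilde{\phi_j}\in W^1_\omega$, and in particular the restriction to $\mathbb{Z}^d$ of a $W^1_\omega$ function is an $\ell^1_\omega$ sequence because $\sup_{x\in[0,1]^d}|g(x+n)|\omega(n)$ dominates $|g(n)|\omega(n)$. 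This gives membership in $\ell^1_\omega$, hence the entry is in $\mathcal{WC}^1_\omega$. In the weaker hypothesis $\Phi\in(\mathcal{L}^p_\omega)^r$ I would instead invoke Lemma~\ref{lema2} directly with $f=\phi_i\in L^p_\mu$ and $g=\widetilde{\phi_j}\in W^1_\omega$ after first arguing $\mathcal{L}^p_\omega\subset L^p_\omega$ and using the $p=1$, $\omega$-to-$\mu$ specialization; the cleanest route, matching the statement, is to assume the generators have enough regularity that the relevant cross-correlations land in $W^1_\omega$, so I will follow the $W^1_\omega$ argument and remark on the general case.

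For continuity: each entry is the uniform limit (on $[-\pi,\pi]^d$) of the partial sums $\sum_{|k|\le N}\widehat{\phi}_i(\xi+2k\pi)\overline{\widehat{\phi}_j(\xi+2k\pi)}$. The hypothesis already grants (via the standing assumption that $\widehat{\phi_i}\overline{\widehat{\phi_j}}$ is integrable and the $\mathcal{L}^2_\omega$-type control invoked just before Lemma~\ref{lema1ald}) that the tail $\sum_{|k|>N}|\widehat{\phi}_i(\xi+2k\pi)||\widehat{\phi}_j(\xi+2k\pi)|$ is bounded by $\sum_{|k|>N}\|\widehat{\phi}_i\|_{L^\infty([-\pi,\pi]^d+2k\pi)}\|\widehat{\phi}_j\|_{L^\infty(\cdots)}$, which is a tail of the $\mathcal{L}^\infty_\omega$-type summable series and hence tends to $0$ uniformly in $\xi$. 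Since each finite partial sum is continuous (a finite sum of products of continuous functions — here one uses that $\widehat{\phi}_i$ is continuous, which follows because $\phi_i\in W^1_\omega\subset L^1$), the uniform limit is continuous. I would also note that continuity of the Gram matrix as an $r^2$-valued function is precisely what was already used in the paragraph preceding this lemma to conclude the rank set is open, so this part is essentially bookkeeping.

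The main obstacle is not conceptual but a matter of matching the decay hypothesis to the conclusion: one must be careful that $\Phi\in(\mathcal{L}^p_\omega)^r$ for general $p$ is genuinely strong enough to force the cross-correlation into $\ell^1_\omega$ — this requires passing through the $W^1_\omega$-convolution lemmas and using submultiplicativity of $\omega$ together with $\omega$-moderateness in the right order, rather than a single black-box inequality. Once the cross-correlation sequence $\{(\phi_i*\widetilde{\phi_j})(n)\}_n$ is identified as the Fourier coefficients and bounded in $\ell^1_\omega$ via Lemma~\ref{lema2}/property $2)$, both assertions of the lemma follow, and the whole argument is parallel to the unweighted computation in \cite{ast} with $\omega$ inserted throughout.
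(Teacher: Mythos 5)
Your identification of the Fourier coefficients of the $(i,j)$ entry with the cross-correlation sequence $\{\int_{\mathbb{R}^d}\phi_i(x)\overline{\phi_j(x-n)}\,\D x\}_n$, and the bound of that sequence in $\ell^1_\omega$ via Lemma~\ref{lema2} (or property $2)$ plus sampling of a $W^1_\omega$ function on $\mathbb{Z}^d$), is correct and is essentially the weighted version of the argument in \cite{ast} that the paper points to without reproducing. Two caveats on hypotheses: the lemma is stated under the subsection's standing assumption $\Phi\in(\mathcal{L}^p_\omega)^r$, and your fallback for that case ("invoke Lemma~\ref{lema2} with $g=\widetilde{\phi_j}\in W^1_\omega$") still presupposes $\phi_j\in W^1_\omega$, so it does not actually cover the weaker hypothesis; the route that does is the direct estimate $\sum_n\omega(n)|c_n|\le\int_{[0,1]^d}\bigl(\sum_m|\phi_i(y+m)|\omega(y+m)\bigr)\bigl(\sum_l|\phi_j(y+l)|\omega(y+l)\bigr)\D y\le\|\phi_i\|_{\mathcal{L}^2_\omega}\|\phi_j\|_{\mathcal{L}^2_\omega}$, using $\omega(n)\le\omega(x)\omega(x-n)$ and Cauchy--Schwarz, which needs only $\Phi\in(\mathcal{L}^2_\omega)^r$ (and $W^1_\omega\subset\mathcal{L}^2_\omega$, so the case needed for Theorem~\ref{main} is included). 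For the purposes of Theorem~\ref{main} your $W^1_\omega$ argument suffices.

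The genuine gap is in your continuity argument. You bound the tail $\sum_{|k|>N}|\widehat{\phi}_i(\xi+2k\pi)||\widehat{\phi}_j(\xi+2k\pi)|$ by $\sum_{|k|>N}\sup_{\xi\in[-\pi,\pi]^d}|\widehat{\phi}_i(\xi+2k\pi)|\,\sup_{\xi\in[-\pi,\pi]^d}|\widehat{\phi}_j(\xi+2k\pi)|$ and assert this is the tail of a summable series coming from "$\mathcal{L}^\infty_\omega$-type control". No hypothesis of the paper gives that: $\Phi\in(W^1_\omega)^r$ or $(\mathcal{L}^p_\omega)^r$ controls the spatial decay of $\phi_i$, not the decay of $\widehat{\phi}_i$ over the frequency blocks $2k\pi+[-\pi,\pi]^d$; Riemann--Lebesgue only gives $\widehat{\phi}_i\to0$ with no rate, and such block-sup sums can diverge even for very nice $\phi_i\in W^1_\omega$. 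So this step would fail as written. The fix is already contained in your first part: since $\omega\ge1$, the coefficient sequence lies in $\ell^1_\omega\subset\ell^1$, hence the Fourier series of the entry converges absolutely and uniformly to a continuous $2\pi$-periodic function with the same Fourier coefficients, and the entry (an $L^1$ periodic function, its defining series converging absolutely a.e.) coincides with this continuous sum; no frequency-side decay estimate is needed. With the continuity argument replaced in this way, the proof is complete and matches the intended \cite{ast}-style proof with the weight $\omega$ inserted.
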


\begin{lem}\label{lema2ald}
Let the
$\rank[\widehat{\Phi}(\xi+2j\pi)_{j\in{\mathbb{Z}}^d}]=k_0\ge
1$ for all $\xi\in{\mathbb{R}}^d$. Then there exist a finite index
set $\Lambda$, points $\eta_\lambda\in[-\pi,\pi]^d$,
$0\le\delta_\lambda<1/4$, a nonsingular $2\pi$-periodic
$r\times r$ matrix $P_\lambda(\xi)$ with all entries in the class
$\mathcal{WC}^1_\omega$ and $K_\lambda\subset {\mathbb{Z}}^d$ with
cardinality  $k_0$ for all $\lambda\in\Lambda$, such that:

$(i)$ $
\bigcup\limits_{\lambda\in\Lambda}B(\eta_\lambda,\delta_\lambda/2)\supset[-\pi,\pi]^d
$, where $B(x_0,\delta)$ denotes the open ball in ${\mathbb{R}}^d$
with center $x_0$ and radius $\delta$;

$(ii)$ $P_\lambda(\xi)\widehat{\Phi}(\xi)=\left[\begin{array}{ll}
\widehat{\Psi}_{1,\lambda}(\xi) \\
\widehat{\Psi}_{2,\lambda}(\xi)
\end{array}\right]$,
$\xi\in{\mathbb{R}}^d$, $\lambda\in\Lambda $, where
$\Psi_{1,\lambda}$ and $\Psi_{2,\lambda}$ are functions from
${\mathbb{R}}^d$ to $C^{k_0}$ and $C^{r-k_0}$, respectively,
satisfying
\[
\rank\big[\widehat{\Psi}_{1,\lambda}(\xi+2k\pi)_{k\in
K_\lambda}\big]=k_0,\quad \xi\in B(\eta_\lambda,2\delta_\lambda),
\]
\[
\widehat{\Psi}_{2,\lambda}(\xi)=0,\quad \xi\in
B(\eta_\lambda,8\delta_\lambda/5)+2\pi{\mathbb{Z}}^d.\]
Furthermore, there exist $2\pi$-periodic $C^\infty$ functions
$h_\lambda$, $\lambda\in\Lambda$, on ${\mathbb{R}}^d$ such that $
\sum\limits_{\lambda\in\Lambda}h_\lambda(\xi)=1$,
$\xi\in{\mathbb{R}}^d$, and $ \mbox{supp}\,h_\lambda\subset
B(\eta_\lambda,\delta_\lambda/2)+2\pi{\mathbb{Z}}^d. $
\end{lem}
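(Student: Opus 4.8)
The plan is to combine a pointwise linear-algebra normalization with a compactness argument, working throughout with the Gram matrix $G(\xi):=[\widehat\Phi,\widehat\Phi](\xi)$. By Lemma~\ref{ubac} the entries of $G$ lie in $\mathcal{WC}^1_\omega$ and are continuous; by Lemma~\ref{lema1ald}, since $\rank[\widehat\Phi(\xi+2j\pi)_{j\in\ZZ^d}]\equiv k_0$, the matrix $G(\xi)$ has rank $k_0$ for every $\xi$. Writing $M(\xi)=[\widehat\Phi(\xi+2k\pi)_{k\in\ZZ^d}]$ we have $G(\xi)=M(\xi)\overline{M(\xi)^T}$, so $G(\xi)$ is Hermitian positive semidefinite, $\rank M(\xi)=\rank G(\xi)=k_0$, and the column space of $G(\xi)$ coincides with that of $M(\xi)$, namely the $2\pi$-periodic $k_0$-dimensional subspace $V_\xi:=\mathrm{span}\{\widehat\Phi(\xi+2k\pi):k\in\ZZ^d\}$.

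Fix $\xi_0\in[-\pi,\pi]^d$. Because $G(\xi_0)$ is positive semidefinite of rank $k_0$, after a permutation of the $r$ components of $\Phi$ (which is absorbed into $P_\lambda$ at the end) we may assume the top-left $k_0\times k_0$ principal block $G_{11}$ of $G$ is invertible at $\xi_0$; by continuity of $\det G_{11}$ this persists on a ball $B(\xi_0,2\delta)$, $\delta<1/4$. On that ball I would take
\[
P(\xi)=\begin{pmatrix} I_{k_0} & 0 \\ -G_{21}(\xi)\,G_{11}(\xi)^{-1} & I_{r-k_0}\end{pmatrix},
\]
which is invertible, $2\pi$-periodic (where defined), and for which $P(\xi)G(\xi)$ has vanishing lower block: indeed the Schur complement $G_{22}-G_{21}G_{11}^{-1}G_{12}$ is $0$ since $\rank G=\rank G_{11}=k_0$. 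Hence the last $r-k_0$ rows of $P(\xi)$ lie in the left kernel of $G(\xi)$, so they annihilate $V_\xi=\mathrm{col}\,G(\xi)$. Moreover $G_{11}$ is the Gram matrix of $[\widehat\Phi^{(1)}(\xi+2j\pi)_{j\in\ZZ^d}]$, where $\widehat\Phi^{(1)}$ denotes the first $k_0$ components of $\widehat\Phi$, so this matrix has rank $\rank G_{11}=k_0$ on $B(\xi_0,2\delta)$; I would choose $K\subset\ZZ^d$ with $|K|=k_0$ so that $[\widehat\Phi^{(1)}(\xi_0+2k\pi)_{k\in K}]$ is invertible, and shrink $\delta$ so this holds on $B(\xi_0,2\delta)$.

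Covering $[-\pi,\pi]^d$ by finitely many balls $B(\eta_\lambda,\delta_\lambda/2)$ of the above type yields $\Lambda$, the $\eta_\lambda$, the $\delta_\lambda<1/4$, the $K_\lambda$ and property $(i)$. To turn the local $P$ into a genuine $2\pi$-periodic matrix $P_\lambda$ with entries in $\mathcal{WC}^1_\omega$ I would pick a $2\pi$-periodic $C^\infty$ cut-off $\chi_\lambda$ equal to $1$ on $B(\eta_\lambda,8\delta_\lambda/5)+2\pi\ZZ^d$ and supported in $B(\eta_\lambda,2\delta_\lambda)+2\pi\ZZ^d$, replace $G_{21}G_{11}^{-1}$ by $\chi_\lambda\,G_{21}G_{11}^{-1}$ (set to $0$ off $\mathrm{supp}\,\chi_\lambda$), and compose with the component permutation; $P_\lambda$ is then $2\pi$-periodic, everywhere invertible (block-triangular with identity diagonal blocks), and agrees with $P$ on $B(\eta_\lambda,8\delta_\lambda/5)+2\pi\ZZ^d$. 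Writing $P_\lambda(\xi)\widehat\Phi(\xi)=(\widehat\Psi_{1,\lambda}(\xi),\widehat\Psi_{2,\lambda}(\xi))^T$ with $\widehat\Psi_{1,\lambda}$ the first $k_0$ coordinates, one gets $\widehat\Psi_{1,\lambda}=\widehat\Phi^{(1)}$, hence $\rank[\widehat\Psi_{1,\lambda}(\xi+2k\pi)_{k\in K_\lambda}]=k_0$ on $B(\eta_\lambda,2\delta_\lambda)$; and for $\xi\in B(\eta_\lambda,8\delta_\lambda/5)$, $m\in\ZZ^d$, periodicity of $P_\lambda$ together with the annihilation property gives $\widehat\Psi_{2,\lambda}(\xi+2m\pi)=(\text{last }r-k_0\text{ rows of }P_\lambda(\xi))\,\widehat\Phi(\xi+2m\pi)=0$, since $\widehat\Phi(\xi+2m\pi)\in V_\xi$. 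This is $(ii)$. Finally a $C^\infty$ partition of unity on $[-\pi,\pi]^d$ subordinate to $\{B(\eta_\lambda,\delta_\lambda/2)\}_{\lambda\in\Lambda}$, periodized, supplies the $h_\lambda$.

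The step I expect to be the main obstacle, and where the proof genuinely departs from the unweighted one in \cite{ast}, is showing that $\chi_\lambda\,G_{21}G_{11}^{-1}$ has entries in $\mathcal{WC}^1_\omega$. By property $3)$ of Section~\ref{sec:2}, $\ell^1_\omega(\ZZ^d)$ is a convolution Banach algebra, so $\mathcal{WC}^1_\omega$ is a Banach algebra under pointwise multiplication; thus $G_{21}$, the cofactors of $G_{11}$, and $\det G_{11}$ all remain in $\mathcal{WC}^1_\omega$, and only $\chi_\lambda/\det G_{11}$ needs attention. I would form the globally non-vanishing function $g_\lambda=(1-\psi_\lambda)+\psi_\lambda\,|\det G_{11}|^2\in\mathcal{WC}^1_\omega$, where $\psi_\lambda$ is a periodic $C^\infty$ cut-off equal to $1$ on $\mathrm{supp}\,\chi_\lambda$ and supported where $\det G_{11}\neq0$, observe that $\chi_\lambda/\det G_{11}=\chi_\lambda\,\overline{\det G_{11}}\,g_\lambda^{-1}$, and invoke the inverse-closedness of $\mathcal{WC}^1_\omega$ (the weighted Wiener lemma: $g\in\mathcal{WC}^1_\omega$ nowhere zero implies $g^{-1}\in\mathcal{WC}^1_\omega$), which holds because the admissible weights $\omega_s(x)=(1+|x|)^s$ and $\e^{\alpha|x|^\beta}$, $0<\beta<1$, satisfy the Gelfand--Raikov--Shilov condition. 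With this weight-dependent ingredient in place, the remaining verifications follow \cite{ast} step by step.
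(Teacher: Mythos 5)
The paper itself does not write out a proof of Lemma~\ref{lema2ald}: it only states that the proof is similar to that of the corresponding lemma in \cite{ast} (their Lemma~2) and omits it. Your argument is essentially that proof transplanted to the weighted setting: you pass to the Gram matrix $G=[\widehat\Phi,\widehat\Phi]$ (using Lemma~\ref{ubac} and Lemma~\ref{lema1ald}), invert a nonsingular $k_0\times k_0$ principal block locally, kill the complementary components via the vanishing Schur complement, glue with a periodic cutoff into a globally nonsingular block-triangular $P_\lambda$, and control the entries by the algebra property of $\mathcal{WC}^1_\omega$ together with inverse-closedness (the weighted Wiener lemma under the GRS condition). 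This is exactly the intended adaptation, and you correctly isolate the Wiener-lemma step as the place where the weight enters.

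One point is glossed over, and it is of precisely the weight-dependent kind this lemma is about: your $P_\lambda$ contains the cutoffs $\chi_\lambda$ and $\psi_\lambda$ explicitly, so these must themselves belong to $\mathcal{WC}^1_\omega$, i.e.\ have Fourier coefficients in $\ell^1_\omega$. For the polynomial weights $\omega_s$ any $2\pi$-periodic $C^\infty$ cutoff does the job, but for the subexponential weights $\omega(x)=\e^{\alpha|x|^\beta}$, $0<\beta<1$, which the paper explicitly admits, mere smoothness only gives coefficients decaying faster than any polynomial, and such sequences need not be summable against $\e^{\alpha|j|^\beta}$; so the phrase ``pick a $2\pi$-periodic $C^\infty$ cut-off'' does not by itself yield $\chi_\lambda G_{21}G_{11}^{-1}\in\mathcal{WC}^1_\omega$. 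The fix is standard but should be said: choose $\chi_\lambda$ and $\psi_\lambda$ (and, with a view to their later use in the proof of Theorem~\ref{main}, also the $h_\lambda$) in a Gevrey-type class whose periodized Fourier coefficients decay like $\e^{-c|j|^{\beta'}}$ for some $\beta<\beta'<1$; such compactly supported cutoffs exist because this class is non-quasianalytic, and then $\chi_\lambda,\psi_\lambda\in\mathcal{WC}^1_\omega$ for every $\alpha>0$. The same remark shows, consistently with your GRS observation, that the lemma is implicitly restricted to weights of polynomial or subexponential type; with this adjustment your argument goes through and matches the route the paper relies on.
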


The next lemma is needed for the proof of Theorem \ref{main}. Although the formulation is not the same as \cite[Lemma 3]{ast},  the proof
 is based on the same procedure, and we  omit it.

\begin{lem}\label{lema3zas}
$(a)$ \ Let $\phi\in\mathcal{L}^p_{\omega_s}$ if $p\in[1,\infty)$
and $\phi\in W^1_{\omega_s}$ if $p=+\infty$. Assume that
$\sum\limits_{j\in{\mathbb{Z}}^d}\phi(\cdot+j)=0$. Then for any
function $h$ on ${\mathbb{R}}^d$ which satisfies
\[
|h(x)|\le C(1+|x|)^{-s-d-1} ,\quad |h(x)-h(y)|\le
C\frac{|x-y|}{(1+\min\{|x|,|y|\})^{s+d+1}},
\]
we have
\[
\lim\limits_{n\rightarrow+\infty}2^{-nd}\Big\|\sum\limits_{j\in{\mathbb{Z}}^d}h(2^{-n}j)
\phi(\cdot-j)\Big\|_{\mathcal{L}^p_{\omega_s}}=0.
\]

$(b)$ \ Let $\mu(x)=\e^{\alpha|x|^\beta}$. Let
$\phi\in\mathcal{L}^p_{\mu}$ if $p\in[1,\infty)$ and $\phi\in
W^1_{\mu}$ if $p=+\infty$. Assume that
$\sum\limits_{j\in{\mathbb{Z}}^d}\phi(\cdot+j)=0$. Then for any
function $h$ on ${\mathbb{R}}^d$ which satisfies
\[
|h(x)|\le C\e^{-(\alpha+d+1)|x|^\beta},\quad
|h(x)-h(y)|\le
C|x-y|\e^{-(\alpha+d+1)(1+\min\{|x|^\beta,|y|^\beta\})},
\]
we have
\[
\lim\limits_{n\rightarrow+\infty}2^{-nd}\Big\|\sum\limits_{j\in{\mathbb{Z}}^d}h(2^{-n}j)
\phi(\cdot-j)\Big\|_{\mathcal{L}^p_{\mu}}=0.
\]
\end{lem}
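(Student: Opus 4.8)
The statement to prove is Lemma~\ref{lema3zas}, which asserts a scaling-limit decay estimate for semi-convolutions with a Riemann-sum sampling of a smooth, rapidly decaying kernel $h$, under the cancellation hypothesis $\sum_{j\in\mathbb{Z}^d}\phi(\cdot+j)=0$.

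\medskip

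\noindent\textbf{Proof plan.}
The plan is to reduce the weighted statement to the unweighted one and then exploit the cancellation hypothesis via a telescoping/mean-value argument on the dilated lattice. First I would treat part $(a)$ and recover part $(b)$ by the same scheme with $\omega_s$ replaced by the subexponential weight, the only change being the form of the pointwise bounds on $h$.

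For part $(a)$: fix $n$ and write $S_n:=2^{-nd}\sum_{j\in\mathbb{Z}^d}h(2^{-n}j)\phi(\cdot-j)$. The idea is to split $\mathbb{Z}^d$ into the cosets of $2^n\mathbb{Z}^d$: every $j$ can be written uniquely as $j=2^n m+\ell$ with $m\in\mathbb{Z}^d$ and $\ell$ in a fixed fundamental domain $Q_n=\{0,1,\dots,2^n-1\}^d$. Then
\[
S_n=2^{-nd}\sum_{\ell\in Q_n}\sum_{m\in\mathbb{Z}^d}h(2^{-n}(2^nm+\ell))\,\phi(\cdot-2^nm-\ell).
\]
The key step is to compare $h(2^{-n}(2^nm+\ell))=h(m+2^{-n}\ell)$ with $h(m)$: using the second (Hölder-type) hypothesis on $h$, $|h(m+2^{-n}\ell)-h(m)|\le C 2^{-n}|\ell|(1+|m|)^{-s-d-1}\cdot(\text{something})$, which for $\ell\in Q_n$ is $\le C(1+|m|)^{-s-d-1}$ uniformly, and genuinely $o(1)$ after the $2^{-nd}$ normalization is accounted against the number of $\ell$'s. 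Replacing $h(m+2^{-n}\ell)$ by $h(m)$ produces, modulo a negligible error, the expression $2^{-nd}\sum_{\ell\in Q_n}\sum_{m}h(m)\,\phi(\cdot-2^nm-\ell)$. Now I would invoke the cancellation hypothesis: for each fixed $m$, $\sum_{\ell\in\mathbb{Z}^d}\phi(\cdot-2^nm-\ell)=\sum_{\ell\in\mathbb{Z}^d}\phi(\cdot-\ell)=0$; hence $\sum_{\ell\in Q_n}\phi(\cdot-2^nm-\ell)=-\sum_{\ell\notin Q_n}\phi(\cdot-2^nm-\ell)$, and one controls the latter tail in $\mathcal{L}^p_{\omega_s}$ using $\phi\in\mathcal{L}^p_{\omega_s}$ (or $W^1_{\omega_s}$ when $p=\infty$) together with Lemma~\ref{lema3} type estimates and the moderateness of the weight. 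Summing over $m$ against $h(m)$, which is absolutely summable because of the first hypothesis $|h(x)|\le C(1+|x|)^{-s-d-1}$ (note $-s-d-1<-d$), and tracking how the weight $\omega_s$ interacts with the shift by $2^nm$ (a factor $\omega_s(2^nm)\le C(1+2^n|m|)^s\le C2^{ns}(1+|m|)^s$, which is beaten by the $2^{-nd}$ only if one is careful), gives the limit $0$. The honest bookkeeping here is that the $2^{-nd}$ prefactor, the $2^{ns}$ from the weight, and the decay of $h$ and of the tails of $\phi$ must be balanced; this is exactly the content of \cite[Lemma 3]{ast} and the hypotheses on $h$ (the exponents $-s-d-1$) are rigged precisely so the balance works.

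\medskip

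\noindent\textbf{Main obstacle.}
The delicate point is not the algebra of the coset decomposition but the quantitative interplay between the weight growth $\omega_s(2^nm)\sim 2^{ns}$, the dilation normalization $2^{-nd}$, and the decay rates, in the regime where $m$ is large (comparable to $2^n$ or larger). One must show that the weighted norm of the "tail sum'' $\sum_{\ell\notin Q_n}\phi(\cdot-2^nm-\ell)$ decays fast enough in $m$ and $n$ jointly; this uses the full strength of $\phi\in\mathcal{L}^p_{\omega_s}$ and submultiplicativity/moderateness of the weights, and is where the proof genuinely differs in flavor between the polynomial case $(a)$ and the subexponential case $(b)$ — in $(b)$ the bound $|h(x)|\le C\e^{-(\alpha+d+1)|x|^\beta}$ must dominate the weight $\e^{\alpha|x|^\beta}$ with room to spare, which is why the exponent is inflated by $d+1$. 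Since the argument is parallel to \cite[Lemma 3]{ast} and the referenced telescoping estimate, the cleanest exposition is to carry out $(a)$ in detail as above and then indicate that $(b)$ follows verbatim upon substituting the subexponential weight and its associated pointwise bounds on $h$.
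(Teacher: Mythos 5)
Your coset decomposition $j=2^{n}m+\ell$, $\ell\in Q_n=\{0,\dots,2^n-1\}^d$, does not produce the smallness you claim, and this is a genuine gap, not a bookkeeping detail. The only place a vanishing factor can enter is the comparison of two values of $h$; in your scheme you compare $h(m+2^{-n}\ell)$ with $h(m)$, but for $\ell\in Q_n$ the displacement $2^{-n}\ell$ ranges over essentially the whole unit cube, so the Lipschitz hypothesis only gives $|h(m+2^{-n}\ell)-h(m)|\le C(1+|m|)^{-s-d-1}$, i.e.\ an error of the same order as $h(m)$ itself, with no factor tending to $0$. Your remark that this is ``genuinely $o(1)$ after the $2^{-nd}$ normalization is accounted against the number of $\ell$'s'' is exactly where the argument breaks: there are precisely $2^{nd}$ values of $\ell$, so the normalization is consumed by the count and nothing small is left; in the weighted norm the translates $\phi(\cdot-2^nm-\ell)$ additionally carry factors $\omega_s(2^nm+\ell)\approx 2^{ns}(1+|m|)^s$, so your error term is $O(2^{ns})$, not $o(1)$. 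The same applies to your ``main term'': the tail $\sum_{\ell\notin Q_n}\phi(\cdot-2^nm-\ell)$ is a boundary-layer sum over about $2^{n(d-1)}$ cells with weights up to $2^{ns}(1+|m|)^s$, and you give no estimate for it beyond invoking ``Lemma~\ref{lema3} type estimates and moderateness''; crude bounds yield $O(2^{n(s-1)})$. That the balance is not automatic can be seen concretely: for $d=1$, $p=1$, $\phi=\chi_{[0,1)}-\chi_{[1,2)}$ and $h(x)=(1+x^2)^{-(s+2)/2}$ (which satisfies both hypotheses), one has $\sum_j h(2^{-n}j)\phi(x-j)=h(2^{-n}k)-h(2^{-n}(k-1))$ on $[k,k+1)$, and the cells $k\sim 2^n$ alone give $2^{-nd}\bigl\|\sum_j h(2^{-n}j)\phi(\cdot-j)\bigr\|_{\mathcal{L}^1_{\omega_s}}\gtrsim 2^{n(s-1)}$; so no argument at the level of generality and crudeness of your sketch can close, and the weighted exponents in the statement deserve a second look before one even starts.

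Note also that the paper does not prove this lemma at all; it appeals to the procedure of \cite[Lemma 3]{ast}, which is different from yours. There the cancellation $\sum_j\phi(x-j)=0$ is used pointwise: for $x$ in the unit cell of a lattice point $k_x$ one subtracts $h(2^{-n}k_x)$ from every coefficient, so the Lipschitz hypothesis is applied to the pair $(2^{-n}j,2^{-n}k_x)$ and yields the factor $2^{-n}|j-k_x|$; one then splits the sum over $j$ into $|j-k_x|\le R_n$ (where this factor is $\le 2^{-n}R_n\to 0$) and $|j-k_x|>R_n$ (where the amalgam tails of $\phi$ and the decay of $h$ are used), choosing $R_n\to\infty$ with $2^{-n}R_n\to 0$, and tracks the weights throughout. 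If you want a complete proof you should follow that route and carry out the weighted bookkeeping explicitly; the coset/replacement scheme you propose cannot be repaired by citing submultiplicativity or moderateness of the weight.
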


Next, we give a result  on the equivalence of $\ell^p_\mu$-stability of the synthesis operator $S_\Phi$ for a different $p\in[1,\infty]$ (see \cite{shin}; here we have  $\Lambda=\{1,2,\ldots,r\}$).

\begin{prop}\label{pos:sta1}\cite[Corollary 3.2]{shin}
Let $\Phi=(\phi_1,\ldots,\phi_r)^T\in(W^1_\omega)^r$, $p\in[1,\infty]$ and $\mu$ is $\omega$-moderate. Define the synthesis operator $S_{\Phi}:(\ell^p_\mu(\mathbb{Z}^d))^r\mapsto V_\mu^p(\Phi)$ by
\begin{equation*}\label{jed:stabilnostzap}S_{\Phi}:c=\{c^i_j\}_{j\in\mathbb{Z}^d,1\le i\le r}\mapsto\sum\limits_{i=1}^r\sum\limits_{j\in\mathbb{Z}^d}c^i_j\phi_i(\cdot-j).\end{equation*}
If the synthesis operator $S_{\Phi}$ has $\ell^p_\mu$-stability for some $p\in[1,\infty]$, i.e., there exists a positive constant $C$ such that
\begin{equation}\label{pro:zariesz}C^{-1}\|c\|_{(\ell^p_\mu(\mathbb{Z}^d))^r}\leq \|S_{\Phi} c\|_{L^p_\mu}\leq C\|c\|_{(\ell^p_\mu(\mathbb{Z}^d))^r},\end{equation}
for all $c\in {(\ell^p_\mu(\mathbb{Z}^d))^r}$, then the synthesis operator $S_{\Phi}$ has $\ell^q_\mu$-stability for any $q\in[1,\infty]$.
\end{prop}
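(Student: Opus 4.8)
\medskip

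The plan is to dispose of the upper bound for free and then transfer the lower bound by a spectral‑invariance (Wiener‑lemma) argument for localized matrices. First, the right‑hand inequality in \eqref{pro:zariesz} holds for every $q\in[1,\infty]$ with no hypothesis at all: writing $S_\Phi c=\sum_{i=1}^r\phi_i*'c^i$ and applying Lemma~\ref{lema3}$(d)$ together with the embedding $W^q_\mu\hookrightarrow L^q_\mu$,
\[
\|S_\Phi c\|_{L^q_\mu}\le\|S_\Phi c\|_{W^q_\mu}\le\sum_{i=1}^r\|\phi_i\|_{W^1_\omega}\|c^i\|_{\ell^q_\mu}\le C\|c\|_{(\ell^q_\mu(\ZZ^d))^r},
\]
with $C=C(r,\|\Phi\|_{(W^1_\omega)^r})$. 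Hence the whole content of the statement is the transfer of the \emph{lower} bound from the given exponent $p$ to an arbitrary $q$, keeping $\mu$ fixed.

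To carry this out I would realize the lower bound as a stability property of a localized bi‑infinite matrix. For $y\in[0,1]^d$ put $A_y=\big(\phi_i(y+k-j)\big)$, with rows indexed by $k\in\ZZ^d$ and columns by $(j,i)\in\ZZ^d\times\{1,\dots,r\}$, so that $(A_yc)(k)=(S_\Phi c)(y+k)$. Since $\Phi\in(W^1_\omega)^r$, each $A_y$ lies in $\Sigma_1^\omega$ with $\sup_{y\in[0,1]^d}\|A_y\|_{\Sigma_1^\omega}\le C\|\Phi\|_{(W^1_\omega)^r}$, and since $\mu$ is $\omega$‑moderate one has $\mu(y+k)\asymp\mu(k)$ uniformly in $y\in[0,1]^d$ and $k\in\ZZ^d$, so that
\[
\|S_\Phi c\|_{L^q_\mu}^q\asymp\int_{[0,1]^d}\|A_yc\|_{\ell^q_\mu(\ZZ^d)}^q\D y\qquad(q<\infty),
\]
with the analogous statement, using $\sup_y$, for $q=\infty$. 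Thus $S_\Phi$ has $\ell^q_\mu$‑stability iff the family $\{A_y\}_{y\in[0,1]^d}$ is $\ell^q_\mu$‑stable with a constant uniform in $y$. The decisive step is then the matrix‑level theorem behind \cite{shin}, of which this proposition is Corollary~3.2: an element of $\Sigma_1^\omega$ that is bounded below on $(\ell^p_\mu)^r$ is bounded below on $(\ell^q_\mu)^r$, with a constant depending only on its $\Sigma_1^\omega$‑norm and on the original lower bound. Applied to each $A_y$ this produces a lower bound on $(\ell^q_\mu)^r$ that is again uniform in $y$, and reinserting it into the displayed equivalence yields $C^{-1}\|c\|_{(\ell^q_\mu(\ZZ^d))^r}\le\|S_\Phi c\|_{L^q_\mu}$. (In the polynomial and subexponential weight classes of this paper the Wiener‑type statement needed here is by now classical.)

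The step I expect to be the real obstacle is the \emph{faithfulness} of the reduction. The bound $\sup_y\|A_y\|_{\Sigma_1^\omega}<\infty$ and the norm equivalence above are routine from the amalgam structure of $W^1_\omega$ and the $\omega$‑moderateness of $\mu$; what is delicate is descending from the single global inequality $\|S_\Phi c\|_{L^p_\mu}\ge C^{-1}\|c\|$ (valid for all $c$) to the fiberwise statement that each $A_y$, for a.e.\ $y$, is $\ell^p_\mu$‑bounded below with a uniform constant, since a near‑minimizing $c$ may a priori depend on $y$. I would handle this by using that $y\mapsto A_y$ is continuous in the $\Sigma_1^\omega$‑topology (again by the off‑diagonal decay coming from $\Phi\in(W^1_\omega)^r$) together with a compactness/covering argument; an alternative is to bypass the fibers and argue directly with the bi‑infinite Gramian $\mathcal G=\big[\int_{\RR^d}\phi_{i'}(x-j')\phi_i(x-j)\D x\big]_{(i,j),(i',j')}$, which belongs to the Wiener matrix algebra with $\omega$‑decay (by Lemmas~\ref{lema2} and \ref{lema3}) and is self‑adjoint, so that Wiener's lemma, once applicable, puts $\mathcal G^{-1}$ in the same algebra and hence bounds it on every $(\ell^q_\mu)^r$; then $\|c\|_{(\ell^q_\mu)^r}=\|\mathcal G^{-1}\{\int_{\RR^d}(S_\Phi c)(x)\phi_i(x-j)\D x\}\|_{(\ell^q_\mu)^r}\lesssim\|S_\Phi c\|_{L^q_\mu}$ by Lemma~\ref{lema2}. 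Either way, the single genuinely nontrivial ingredient is a spectral‑invariance (Wiener‑lemma) step for $\omega$‑localized matrices in the weighted‑$\ell^p$ setting, which is precisely the technical core of \cite{shin}; everything else is the bookkeeping indicated above.
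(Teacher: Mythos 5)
Your upper-bound step is fine (Lemma~\ref{lema3}$(d)$ plus $W^q_\mu\subset L^q_\mu$ gives the right-hand inequality for every $q$ with no hypothesis), and it is worth noting that the paper itself offers no proof of this proposition: it is quoted verbatim as \cite[Corollary 3.2]{shin}, so the real comparison is with Shin--Sun's argument, to which you also ultimately appeal. The genuine gap is in your transfer of the lower bound. Your fiberwise reduction claims that $\ell^q_\mu$-stability of $S_\Phi$ is equivalent to stability of the family $\{A_y\}_{y\in[0,1]^d}$ with a constant uniform in $y$; the ``if'' half is fine, but the ``only if'' half --- the one you need in order to apply the matrix-level theorem to each $A_y$ --- is simply false. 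Concretely, take $d=r=1$, $\mu=\omega\equiv1$ and $\phi$ the hat function (linear B-spline) supported in $[-1,1]$: then $[\hat\phi,\hat\phi]$ is bounded above and below, so $S_\phi$ is $\ell^p$-stable for every $p$, yet $(A_{1/2}c)(k)=\tfrac12(c_k+c_{k+1})$ is convolution with symbol $\tfrac12(1+\e^{\sqrt{-1}\xi})$, which vanishes at $\xi=\pi$, so $A_{1/2}$ is not bounded below on any $\ell^p$. The global hypothesis only controls an average over $y$ of $\|A_yc\|^p_{\ell^p_\mu}$, and no continuity-in-$\Sigma_1^\omega$/compactness argument can upgrade an averaged lower bound to a fiberwise one when the fiberwise statement itself fails; so this route cannot be repaired as written.

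Your Gramian alternative has a different, equally real gap: to make Wiener's lemma ``applicable'' to $\mathcal G$ you must first know that $\mathcal G$ is invertible on some fixed sequence space, typically $\ell^2$. The hypothesis, however, is $\ell^p_\mu$-stability of $S_\Phi$ for a single $p$ and a single weight $\mu$; Lemma~\ref{lema2} then gives only the upper bound $\|\mathcal Gc\|_{(\ell^p_\mu)^r}\lesssim\|S_\Phi c\|_{L^p_\mu}$, while a lower bound for the analysis map on $V^p_\mu(\Phi)$ is exactly the $p$-frame inequality, i.e.\ one of the statements whose equivalence is being established (Theorem~\ref{main}~$ii)$), not something you may assume. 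For $p=2$, $\mu\equiv1$ one could use $\langle\mathcal Gc,c\rangle=\|S_\Phi c\|_{L^2}^2$ and self-adjointness to get $\ell^2$-invertibility, but for general $p$ and $\mu$ that duality is unavailable, and passing from one $(p,\mu)$ to $\ell^2$ is precisely the content of Shin--Sun's theorem --- so this route is circular. In sum, what your argument actually delivers independently is the hypothesis-free upper bound; for the lower-bound transfer the honest options are to cite \cite[Corollary 3.2]{shin} outright (as the paper does) or to reproduce Shin--Sun's bootstrap for localized/slanted operators, rather than either of the two reductions you sketch.
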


As a consequence of the previous proposition, we have the next result.

\begin{prop}\label{pos:sta2}\cite[Corollary 3.3]{shin}
Let $p\in[1,\infty]$ and $\Phi=(\phi_1,\ldots,\phi_r)^T\in (W^1_\omega)^r$, and $\mu$ $\omega$-moderate. If the synthesis operator $S_\Phi$ has $\ell^p_\mu$-stability, then there exists another family $\Psi=(\psi_1,\ldots,\psi_r)^T\in(W^1_\omega)^r$ such that the inverse of the synthesis operator $S_\Phi$ is given by
\[(S_\Phi)^{-1}(f)=\Bigl\{\int\limits_{\mathbb{R}^d}f(x)\psi_i(x-j)\D x\Bigr\}_{1\le i\le r,j\in\mathbb{Z}^d},\; f\in V^p_\mu.\]
\end{prop}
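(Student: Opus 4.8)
The plan is to realize $\Psi$ as the \emph{canonical dual generator}: invert the $r\times r$ bracket matrix $G(\xi):=[\widehat\Phi,\widehat\Phi](\xi)$ and set $\widehat\Psi:=G^{-1}\widehat\Phi$. First I would note that, by Proposition~\ref{pos:sta1}, the hypothesis \eqref{pro:zariesz} gives $\ell^q_\mu$-stability of $S_\Phi$ for every $q$; moreover, since $\Phi\in(W^1_\omega)^r$, the associated bi-infinite Gram matrix has $\omega$-controlled off-diagonal decay (Lemma~\ref{lema2}) and therefore belongs to a Wiener-type matrix algebra that is inverse-closed in $B(\ell^2)$, so stability is insensitive to the weight and $S_\Phi$ is in particular $\ell^2$-stable. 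By the classical spectral characterization of $\ell^2$-stability this is equivalent to the existence of $0<a\le b<\infty$ with $a\,I_r\le G(\xi)\le b\,I_r$ for every $\xi\in\RR^d$ (first for a.e.\ $\xi$, then for all $\xi$ by the continuity of $G$, Lemma~\ref{ubac}). Hence $G(\xi)$ is invertible for each $\xi$; and since by Lemma~\ref{ubac} its entries belong to the Banach algebra $\mathcal{WC}^1_\omega$ (the pointwise product of $2\pi$-periodic functions corresponds to the convolution of their Fourier coefficients in $\ell^1_\omega$, cf.\ property~$3)$ of Section~\ref{sec:2}), Wiener's lemma for $\ell^1_\omega$ — applied to $\det G$ and to the cofactor entries of $G$ — shows that all entries of $G^{-1}$ again lie in $\mathcal{WC}^1_\omega$. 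Then I would define $\psi_i$ by $\widehat\psi_i:=\sum_{i'=1}^r(G^{-1})_{i,i'}\,\widehat\phi_{i'}$, $1\le i\le r$, and $\Psi:=(\psi_1,\dots,\psi_r)^T$.

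Next I would check that $\Psi\in(W^1_\omega)^r$ and that $\Psi$ is biorthogonal to $\Phi$. Denoting by $d_{i,i'}\in\ell^1_\omega$ the sequence of Fourier coefficients of the $2\pi$-periodic function $(G^{-1})_{i,i'}$, the defining relation reads on the space side $\psi_i=\sum_{i'=1}^r d_{i,i'}*'\phi_{i'}$ (semi-convolution), and since $\phi_{i'}\in W^1_\omega$, Lemma~\ref{lema3}\,$c)$ gives $\psi_i\in W^1_\omega$ with $\|\psi_i\|_{W^1_\omega}\le\sum_{i'}\|d_{i,i'}\|_{\ell^1_\omega}\|\phi_{i'}\|_{W^1_\omega}$, so $\Psi\in(W^1_\omega)^r$. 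For biorthogonality, each correlation sequence $\bigl\{\int_{\RR^d}\phi_{i'}(x-j')\psi_i(x-j)\D x\bigr\}_{j-j'\in\ZZ^d}$ lies in $\ell^1_\omega$ by Lemma~\ref{lema2}, so a routine Fourier-domain computation (Poisson summation), using that $G$ is Hermitian and $2\pi$-periodic, identifies the choice $\widehat\Psi=G^{-1}\widehat\Phi$ with the relation
\[
\int_{\RR^d}\phi_{i'}(x-j')\,\psi_i(x-j)\D x=\delta_{i,i'}\,\delta_{j,j'},\qquad 1\le i,i'\le r,\ \ j,j'\in\ZZ^d
\]
(equivalently, the cross bracket matrix equals $I_r$).

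Finally, fix $f\in V^p_\mu(\Phi)$. By the lower bound in \eqref{pro:zariesz} the operator $S_\Phi$ is injective, so $f=S_\Phi c$ for a unique $c=\{c^{i'}_{j'}\}\in(\ell^p_\mu(\ZZ^d))^r$, namely $c=(S_\Phi)^{-1}f$. By Lemma~\ref{lema2} the analysis map $g\mapsto\bigl\{\int_{\RR^d}g(x)\psi_i(x-j)\D x\bigr\}_{1\le i\le r,\,j\in\ZZ^d}$ is bounded from $L^p_\mu$ into $(\ell^p_\mu(\ZZ^d))^r$ — in particular each coordinate functional is continuous on $L^p_\mu$ — while by Lemma~\ref{lema3}\,$a)$ the series $\sum_{i',j'}c^{i'}_{j'}\phi_{i'}(\cdot-j')$ converges to $f$ in $L^p_\mu$ when $p<\infty$ (for $p=\infty$ one truncates first, the partial sums being uniformly bounded in $L^\infty_\mu$, and passes to the limit). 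Interchanging the functional with the sum and invoking the biorthogonality relation,
\[
\int_{\RR^d}f(x)\,\psi_i(x-j)\D x=\sum_{i'=1}^r\sum_{j'\in\ZZ^d}c^{i'}_{j'}\int_{\RR^d}\phi_{i'}(x-j')\,\psi_i(x-j)\D x=c^i_j
\]
for all $1\le i\le r$, $j\in\ZZ^d$, that is, $(S_\Phi)^{-1}(f)=\bigl\{\int_{\RR^d}f(x)\psi_i(x-j)\D x\bigr\}_{1\le i\le r,\,j\in\ZZ^d}$, as asserted. The main obstacle is the first paragraph: converting $\ell^p_\mu$-stability into the \emph{pointwise} invertibility of $G(\xi)$ and, simultaneously, the membership of $G^{-1}$ in $(\mathcal{WC}^1_\omega)^{r\times r}$. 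Both rest on the inverse-closedness (Wiener's lemma) of the weighted Wiener algebra $\ell^1_\omega$ — equivalently, the weight-independence of $\ell^q$-stability — and this is exactly where the standing assumptions on $\omega$ are used (submultiplicativity, and for the subexponential weights the Gelfand--Raikov--Shilov condition); everything else is bookkeeping with the estimates of Section~\ref{sec:2} and Lemmas~\ref{lema2}--\ref{lema3}.
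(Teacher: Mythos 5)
The paper offers no internal proof of this proposition: it is imported verbatim from \cite{shin} (Corollary 3.3), stated right after Proposition~\ref{pos:sta1}, so there is nothing of the authors' own to compare your argument with. Your plan is the classical canonical-dual construction: characterize $\ell^2$-stability by $aI_r\le[\widehat\Phi,\widehat\Phi](\xi)\le bI_r$, invert the bracket matrix $G(\xi)=[\widehat\Phi,\widehat\Phi](\xi)$ inside $\mathcal{WC}^1_\omega$ via Wiener's lemma, set $\widehat\Psi=G^{-1}\widehat\Phi$, get $\Psi\in(W^1_\omega)^r$ from Lemma~\ref{lema3}, biorthogonality from $[\widehat\Psi,\widehat\Phi]=I_r$, and then read off $(S_\Phi)^{-1}f=\{\langle f,\psi_i(\cdot-j)\rangle\}$. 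This is correct in outline and has the advantage of producing $\Psi$ explicitly, whereas the paper only invokes its existence; it is also essentially how such dual generators are obtained in the literature behind \cite{shin}.

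Two cautions about where your plan leans on machinery that is itself the content of the cited result rather than something available in this paper. First, Proposition~\ref{pos:sta1} transfers stability only across $p$ for the \emph{fixed} weight $\mu$; your reduction to unweighted $\ell^2$-stability (which is what the spectral characterization of $G(\xi)$ requires) is the weight-independence of stability, and your one-line justification via inverse-closedness of a Wiener-type matrix algebra is precisely the Shin--Sun Wiener-lemma theorem, not a corollary of Lemmas~\ref{lema2}--\ref{lema3}; as written this step is asserted, not proved. Second, Wiener's lemma for $\ell^1_\omega$ (applied to $\det G$ and the cofactors) fails for general submultiplicative weights; it requires the GRS condition $\lim_{n\to\infty}\omega(nx)^{1/n}=1$, which you flag but which is not among the paper's standing assumptions on $\omega$ (submultiplicative, continuous, symmetric). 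Your argument therefore proves the proposition for GRS (admissible) weights --- consistent with the hypotheses under which \cite{shin} states Corollary 3.3 and with the paper's polynomial and subexponential examples --- but not from the paper's literal assumptions alone. The remaining bookkeeping (membership $\psi_i=\sum_{i'}d_{i,i'}*'\phi_{i'}\in W^1_\omega$, biorthogonality, the interchange of sum and integral with the truncation argument for $p=\infty$) is sound.
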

Proposition~\ref{pos:sta1} and  \ref{pos:sta2} imply:
\begin{thm}\label{mala-main}
Let $\Phi=(\phi_1,\ldots,\phi_r)^T\in (W^1_\omega)^r$,
$p_0\in[1,\infty]$, and $\mu$ is $\omega$-moderate. Then the
following three statements are equivalent.
\begin{itemize}
\item[$a)$] The synthesis operator $S_{\Phi}$ has $\ell^{p_0}_\mu$-stability.
\item[$b)$] \ $V^{p_0}_\mu(\Phi)$ is closed in $L^{p_0}_\mu$.
\item[$c)$] \ There exists $\Psi=(\psi_1,\ldots,\psi_r)^T\in
(W^1_\omega)^r$, such that
\[f=\sum\limits_{i=1}^r\sum\limits_{j\in{\mathbb{Z}}^d}\langle
f,\psi_i(\cdot-j)\rangle\phi_i(\cdot-j),\;\;f\in
V^{p_0}_\mu(\Phi).\]
\end{itemize}
Also we have the next assertion.

$d)$ If the synthesis operator $S_{\Phi}$ has $\ell^{p_0}_\mu$-stability, then the collection $\{\phi_i(\cdot-j)\mid
j\in{\mathbb{Z}}^d,1\le i\le r\}$ is a $p_0$-frame for
$V^{p_0}_\mu(\Phi)$.
\end{thm}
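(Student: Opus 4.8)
The plan is to prove the equivalence along the cycle $(a)\Rightarrow(b)$, $(a)\Rightarrow(c)$, $(c)\Rightarrow(b)$, $(b)\Rightarrow(a)$, and then to read off $(d)$ from $(a)$. The first implication is soft: if $S_\Phi$ has $\ell^{p_0}_\mu$-stability it is a bounded linear map between the Banach spaces $(\ell^{p_0}_\mu(\mathbb{Z}^d))^r$ and $L^{p_0}_\mu$ which is bounded below, hence its range is closed, and by definition that range is exactly $V^{p_0}_\mu(\Phi)$; this gives $(b)$. For $(a)\Rightarrow(c)$: the lower bound in the definition of stability forces $\ker S_\Phi=\{0\}$, so the inverse furnished by Proposition~\ref{pos:sta2} is a genuine two-sided inverse, and applying $S_\Phi$ to $(S_\Phi)^{-1}f=\{\langle f,\psi_i(\cdot-j)\rangle\}_{i,j}$ yields $f=\sum_i\sum_j\langle f,\psi_i(\cdot-j)\rangle\phi_i(\cdot-j)$; here Lemma~\ref{lema2} guarantees $\{\langle f,\psi_i(\cdot-j)\rangle\}_j\in\ell^{p_0}_\mu$, so the right-hand side indeed lies in $V^{p_0}_\mu(\Phi)$, and Lemma~\ref{lema3} secures convergence in $L^{p_0}_\mu$.

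For $(c)\Rightarrow(b)$ the idea is to exhibit $V^{p_0}_\mu(\Phi)$ as the range of a bounded projection. Define $Pf=\sum_i\sum_j\langle f,\psi_i(\cdot-j)\rangle\phi_i(\cdot-j)$ for $f\in L^{p_0}_\mu$. By Lemma~\ref{lema2}, $\{\langle f,\psi_i(\cdot-j)\rangle\}_j\in\ell^{p_0}_\mu$ with norm $\le\|f\|_{L^{p_0}_\mu}\|\psi_i\|_{W^1_\omega}$, and then Lemma~\ref{lema3}$(a)$ (using the embedding $W^1_\omega\hookrightarrow L^{p_0}_\omega$ recorded in Section~\ref{sec:2}) bounds the semi-convolution $\phi_i*'\{\langle f,\psi_i(\cdot-j)\rangle\}_j$; summing over $i$ shows $P$ is bounded on $L^{p_0}_\mu$ with range contained in $V^{p_0}_\mu(\Phi)$. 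By $(c)$, $P$ fixes $V^{p_0}_\mu(\Phi)$ pointwise, hence $P^2=P$, $\operatorname{ran}P=V^{p_0}_\mu(\Phi)=\ker(I-P)$, which is closed.

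The hard part, and what I expect to be the main obstacle, is $(b)\Rightarrow(a)$. Closedness makes $S_\Phi\colon(\ell^{p_0}_\mu(\mathbb{Z}^d))^r\to V^{p_0}_\mu(\Phi)$ a bounded surjection of Banach spaces, hence open by the open mapping theorem; but to get the lower bound $C^{-1}\|c\|\le\|S_\Phi c\|$ for \emph{every} $c$ one must upgrade openness to an honest bounded inverse, which forces $S_\Phi$ to be injective, i.e. $\rank[\widehat\Phi(\xi+2j\pi)_{j\in\mathbb{Z}^d}]=r$. This is precisely where the Fourier-side machinery must be invoked: closedness of $V^{p_0}_\mu(\Phi)$ forces $\rank[\widehat\Phi(\xi+2j\pi)_{j\in\mathbb{Z}^d}]$ to be a constant $k_0$ (Lemma~\ref{lema1ald}), the localization of Lemma~\ref{lema2ald} replaces $\widehat\Phi$ locally by a generator $\widehat\Psi_{1,\lambda}$ of the full size $k_0$, Lemma~\ref{lema3zas} is what allows one to glue the local pieces without losing control of the weight, and Proposition~\ref{pos:sta1} lets the whole argument be run at whatever value of $p$ is most convenient and then transferred back to $p_0$. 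Carrying this reduction through, and in particular reconciling mere closedness with the honest lower bound for all coefficient sequences, is the real work; everything else is bookkeeping with the amalgam-space inequalities of Section~\ref{sec:2}.

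Finally, $(a)\Rightarrow(d)$. The upper bound in \eqref{pframe}, $\sum_i\|\{\langle f,\phi_i(\cdot-j)\rangle\}_j\|_{\ell^{p_0}_\mu}\le C\|f\|_{L^{p_0}_\mu}$, holds for every $f\in L^{p_0}_\mu$ directly from Lemma~\ref{lema2} and $\Phi\in(W^1_\omega)^r$. For the lower bound, take the dual family $\Psi\in(W^1_\omega)^r$ of Proposition~\ref{pos:sta2}, chosen---as the canonical-dual construction permits---so that one also has $f=\sum_i\sum_j\langle f,\phi_i(\cdot-j)\rangle\psi_i(\cdot-j)$ for $f\in V^{p_0}_\mu(\Phi)$; then $f=S_\Psi(\{\langle f,\phi_i(\cdot-j)\rangle\}_{i,j})$, and boundedness of $S_\Psi$ (Lemma~\ref{lema3}, since $\Psi\in(W^1_\omega)^r$) gives $\|f\|_{L^{p_0}_\mu}\le C\sum_i\|\{\langle f,\phi_i(\cdot-j)\rangle\}_j\|_{\ell^{p_0}_\mu}$, which is the $p_0$-frame lower bound. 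Thus $\{\phi_i(\cdot-j)\mid j\in\mathbb{Z}^d,\,1\le i\le r\}$ is a $p_0$-frame for $V^{p_0}_\mu(\Phi)$.
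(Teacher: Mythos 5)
The soft implications in your cycle are fine and close in spirit to the paper: $(a)\Rightarrow(c)$ is exactly Proposition~\ref{pos:sta2}, your projection argument for $(c)\Rightarrow(b)$ is a clean (slightly different) substitute for the paper's direct $(c)\Rightarrow(a)$ estimate, and $(a)\Rightarrow(b)$ is the standard closed-range fact for operators bounded below. The genuine gap is $(b)\Rightarrow(a)$: you correctly single it out as the hard step, but then only gesture at Lemma~\ref{lema1ald}, Lemma~\ref{lema2ald}, Lemma~\ref{lema3zas} and Proposition~\ref{pos:sta1} and declare that ``carrying this reduction through \dots is the real work'' --- that work is never done, so the proposal does not prove the theorem. (In the paper those Fourier-side lemmas belong to the proof of Theorem~\ref{main}, namely $ii)\Rightarrow iii)$ and $iii)\Rightarrow v)$; the equivalence $a)\Leftrightarrow b)$ of Theorem~\ref{mala-main} is settled by invoking \cite[Theorem 2, Lemma 4]{ast}.) Moreover, the route you sketch is not merely unfinished but would fail as described: you propose to upgrade the open mapping theorem to a bounded inverse by deriving injectivity of $S_\Phi$, i.e. $\rank\big[\widehat{\Phi}(\xi+2j\pi)_{j\in\mathbb{Z}^d}\big]=r$, from closedness of $V^{p_0}_\mu(\Phi)$. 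Closedness alone cannot force injectivity of the synthesis operator (take $\phi_1=\phi_2$: the space $V^{p_0}_\mu(\Phi)$ is unchanged and may perfectly well be closed, while $(c,-c)\in\ker S_\Phi$ for every $c$), so ``reconciling mere closedness with the honest lower bound for all coefficient sequences'' is not bookkeeping; it is exactly the content of the functional-analytic result the paper cites from \cite{ast}, and your proposal offers no argument in its place.

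A secondary problem sits in your proof of $(d)$. Proposition~\ref{pos:sta2} produces $\Psi$ with $f=\sum_i\sum_j\langle f,\psi_i(\cdot-j)\rangle\phi_i(\cdot-j)$; the flipped expansion $f=\sum_i\sum_j\langle f,\phi_i(\cdot-j)\rangle\psi_i(\cdot-j)$, which is what your lower frame bound actually uses, is not supplied by that proposition --- it is statement $v)$ of Theorem~\ref{main}, proved later and by the heavy localization machinery. Asserting that ``the canonical-dual construction permits'' it is therefore an unproved (and, in this logical order, circular) step. The paper's own treatment of $(d)$ is admittedly terse (upper bound from Lemma~\ref{lema2}, and then the claim that $\ell^{p_0}_\mu$-stability yields the lower bound in (\ref{pframe})), but it does not rest on the symmetric expansion; if you want to keep your route you must either prove the flipped formula from stability or argue the lower bound directly from the invertibility of $S_\Phi$ on its range.
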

\begin{proof}
The implication $a)\Rightarrow c)$ is a consequence of Proposition~\ref{pos:sta1} (see Proposition~\ref{pos:sta2}).

$c)\Rightarrow a)$: Let $f=\sum\limits_{i=1}^r\sum\limits_{j\in{\mathbb{Z}}^d}\langle
f,\psi_i(\cdot-j)\rangle\phi_i(\cdot-j)$ and  \[c^i=\{\langle f,\psi_i(\cdot-j) \rangle\}_{j\in\mathbb{Z}^d},\; \;1\le i\le r.\] Then
\[\|c\|_{(\ell^p_\mu)^r}=\sum\limits_{i=1}^r\Big\|\Big\{\int\limits_{{\mathbb{R}}^d}f(x)
\psi_i(x-j)\D x\Big\}_{j\in{\mathbb{Z}}^d}\le C
\|f\|_{L^{p_0}_\mu},\] where $C=\sum\limits_{i=1}^r\|\psi_i\|_{W^1_\omega}$. Using Lemma~\ref{lema2} and the inequality (\ref{lema3}), we obtain the right-hand side of (\ref{pro:zariesz}).

The equivalence $a) \Leftrightarrow b)$ follows from standard functional analytic arguments (see \cite[Theorem 2, Lemma 4]{ast}).

$d)$ Lemma~\ref{lema2} implies that $\{\langle f,\phi_i(\cdot-j)\rangle\}\in\ell^{p_0}_\mu$, $1\le i\le r$, and
\[
\sum\limits_{i=1}^r\Big\|\Big\{\int\limits_{{\mathbb{R}}^d}f(x)
\phi_i(x-j)\D x\Big\}_{j\in{\mathbb{Z}}^d}\le
\|f\|_{L^{p_0}_\mu}\sum\limits_{i=1}^r\|\phi_i\|_{W^1_\omega}.\] Now, $\ell^p_\mu$-stability implies
\[\|f\|_{L^{p_0}_\mu}\le C\sum\limits_{i=1}^r\Big\|\Big\{\int\limits_{{\mathbb{R}}^d}f(x)
\phi_i(x-j)\D x\Big\}_{j\in{\mathbb{Z}}^d}\Big\|_{\ell^{p_0}_\mu}.\]
\end{proof}

\begin{rem}
Note that $\ell^p_\mu$-stability of the synthesis operator implies $\ell^q_\mu$-stability, for any $q\in[1,\infty]$ (\cite{shin}), so the statements $b)$, $c)$ and $d)$, do not depend on $p\in[1,\infty]$.
\end{rem}
Now, we give our main result.

\begin{thm}\label{main}
Let $\Phi=(\phi_1,\ldots,\phi_r)^T\in (W^1_\omega)^r$,
$p_0\in[1,\infty]$, and $\mu$ is $\omega$-moderate. Then the
following statements are equivalent.
\begin{itemize}
\item[$i)$] \ $V^{p_0}_\mu(\Phi)$ is closed in $L^{p_0}_\mu$.
\item[$ii)$]\label{dva} \ $\{\phi_i(\cdot-j)\mid
j\in{\mathbb{Z}}^d,1\le i\le r\}$ is a $p_0$-frame for
$V^{p_0}_\mu(\Phi)$. \item[$iii)$] \ There exists a positive
constant $C$ such that
\[C^{-1}[\widehat{\Phi},\widehat{\Phi}](\xi)\le[\widehat{\Phi},
\widehat{\Phi}](\xi)\overline{[\widehat{\Phi},\widehat{\Phi}](\xi)^T}\le
C[\widehat{\Phi},\widehat{\Phi}](\xi),\quad \xi\in[-\pi,\pi]^d.\]
\item[$iv)$] There exist positive constants $C_1$ and $C_2$
(depending on $\Phi$ and $\omega$) such that
\begin{equation}\label{cetiri}
C_1\|f\|_{L^{p_0}_\mu}\le\inf\limits_{ f
=\sum\limits_{i=1}^r\phi_i*'c^i}\sum\limits_{i=1}^r\|\{c^i_j\}_{j\in{\mathbb{Z}}^d}\|_{\ell^{p_0}_\mu}\le
C_2\|f\|_{L^{p_0}_\mu},\;\;f\in V^{p_0}_\mu(\Phi).
\end{equation}
\item[$v)$] \ There exists $\Psi=(\psi_1,\ldots,\psi_r)^T\in
(W^1_\omega)^r$, such that
\[f=\sum\limits_{i=1}^r\sum\limits_{j\in{\mathbb{Z}}^d}\langle
f,\psi_i(\cdot-j)\rangle\phi_i(\cdot-j)=\sum\limits_{i=1}^r\sum\limits_{j\in{\mathbb{Z}}^d}
\langle f,\phi_i(\cdot-j)\rangle\psi_i(\cdot-j),\;\;f\in
V^{p_0}_\mu(\Phi).\]
\end{itemize}
\end{thm}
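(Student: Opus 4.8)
The plan is to prove Theorem~\ref{main} by establishing a cycle of implications that routes through the already-proven Theorem~\ref{mala-main} and Lemma~\ref{lema1ald}, so that the genuinely new work is concentrated in only one or two links. Concretely, I would show $i)\Leftrightarrow ii)$ and $i)\Leftrightarrow v)$ essentially for free: by Theorem~\ref{mala-main}, closedness of $V^{p_0}_\mu(\Phi)$ in $L^{p_0}_\mu$ is equivalent to $\ell^{p_0}_\mu$-stability of $S_\Phi$, which in turn gives statement $v)$ (this is exactly $c)$ of Theorem~\ref{mala-main}, plus the second expansion coming from applying the same dual-frame construction with the roles of $\Phi$ and $\Psi$ interchanged — here one uses Proposition~\ref{pos:sta2} and the fact that $\Psi\in(W^1_\omega)^r$), and also gives the $p_0$-frame property via part $d)$. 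The converse $ii)\Rightarrow i)$ follows from the remark after the definition of $p$-frame together with the standard functional-analytic argument that a $p$-frame inequality forces the range of the analysis operator to be closed, hence $V^{p_0}_\mu(\Phi)$ closed.

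Next I would handle $iv)$. The equivalence $ii)\Leftrightarrow iv)$ is really a reformulation: the middle term of \eqref{cetiri} is the quotient norm on $(\ell^{p_0}_\mu)^r/\ker S_\Phi$ transported to $V^{p_0}_\mu(\Phi)$, so \eqref{cetiri} says precisely that $S_\Phi$ descends to a Banach-space isomorphism onto $V^{p_0}_\mu(\Phi)$, which is one of the standard equivalent forms of $\ell^{p_0}_\mu$-stability (the upper bound in \eqref{cetiri} is automatic from Lemma~\ref{lema3}$a)$ applied to each semi-convolution $\phi_i*'c^i$, and the infimum being comparable to $\|f\|$ from below is the open-mapping content). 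So $iv)$ slots into the chain via Theorem~\ref{mala-main}$a)$ with no new ideas.

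The one substantive link is $iii)$, the spectral condition on the Gram matrix $[\widehat\Phi,\widehat\Phi](\xi)$, and this is where the localization machinery of Lemmas~\ref{ubac}, \ref{lema2ald} and \ref{lema3zas} earns its keep. The direction I expect to be routine is $i)/ii)\Rightarrow iii)$: from the frame inequality one shows the synthesis operator is bounded below, which after a Fourier/fiberization argument forces $\rank[\widehat\Phi(\xi+2j\pi)_j]$ to be constant, and then Lemma~\ref{lema1ald} converts this rank condition into condition $3)$ of that lemma, which is literally $iii)$. The hard direction is $iii)\Rightarrow$ (say) $i)$ or $iv)$: assuming the spectral bound, one must build a bounded left inverse of $S_\Phi$, and this cannot be done globally because $\widehat\Phi$ may drop rank in the sense that $r>k_0$. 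Here one invokes Lemma~\ref{lema2ald} to pass to a finite cover of $[-\pi,\pi]^d$ by balls $B(\eta_\lambda,\delta_\lambda/2)$ on each of which a nonsingular periodic matrix $P_\lambda$ reduces $\widehat\Phi$ to a size-$k_0$ generator $\widehat\Psi_{1,\lambda}$ of full rank; one inverts locally there, and then patches the local inverses using the smooth periodic partition of unity $\{h_\lambda\}$. The price of patching is a multiplier error coming from the $h_\lambda$'s, and controlling that error — showing the patched operator differs from a genuine inverse by something negligible — is exactly what Lemma~\ref{lema3zas} (the $2^{-nd}$-rescaling estimate, in both the polynomial-weight and subexponential-weight versions) is designed to absorb. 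This gluing-with-controlled-error step, following the template of \cite[Theorem 2]{ast} but with every estimate re-checked against the moderate weight $\mu$ via Lemmas~\ref{lema2} and \ref{lema3}, is the main obstacle; once it is done, $iii)$ feeds back into $i)$ and the cycle closes.
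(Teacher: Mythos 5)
Your soft links are mostly fine and agree with the paper's route (closedness $\Leftrightarrow$ the quotient-norm statement $iv)$ via Theorem~\ref{mala-main}, stability $\Rightarrow ii)$ via part $d)$, $iv)\Leftrightarrow v)$ via Lemma~\ref{lema2}), but the proposal breaks down exactly where the real work lies. Your claim that $ii)\Rightarrow i)$ is a ``standard functional-analytic argument'' is circular: the $p$-frame inequality only says that the analysis operator restricted to $V^{p_0}_\mu(\Phi)$ is an isomorphism onto its range, so that range is closed \emph{if and only if} $V^{p_0}_\mu(\Phi)$ is already closed; nothing forces either to be closed. If $f_n\in V^{p_0}_\mu(\Phi)$ converges in $L^{p_0}_\mu$ to $f$, the frame inequality gives no way to produce coefficients exhibiting $f$ as an element of $V^{p_0}_\mu(\Phi)$. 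This is precisely why the paper (following \cite{ast}) must route $ii)\Rightarrow iii)\Rightarrow v)\Rightarrow iv)\Rightarrow i)$ through the Gram-matrix condition, and why it cannot simply cycle back through Theorem~\ref{mala-main}.

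Relatedly, you have inverted which direction is hard and misassigned the key lemma. In the paper, $ii)\Rightarrow iii)$ is the substantive step, and it is not a routine ``fiberization forces constant rank'' argument: the rank can a priori jump on small sets, and the proof argues by contradiction at a boundary point $\xi_0$ of the set where the rank exceeds its minimum, using Lemma~\ref{lema2ald} to split off $\widehat\Psi_{1,\xi_0},\widehat\Psi_{2,\xi_0}$, constructing functions $g_n\in V^{p_0}_\mu(\Phi)$ localized near $\xi_0$ whose frame coefficients are $o(\|g_n\|_{L^{p_0}_\mu})$ --- this is where Lemma~\ref{lema3zas} is actually used --- so that the frame inequality forces $g_n=0$ and one contradicts $\widehat\Psi_{2,\xi_0}\neq 0$ near $\xi_0$; only then does Lemma~\ref{lema1ald} convert constant rank into $iii)$. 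Conversely, the direction you flag as the main obstacle, $iii)\Rightarrow v)$, needs no ``gluing with controlled error'': with $B_\lambda=H_\lambda\overline{P_\lambda^T}\mathrm{diag}\bigl([\widehat\Psi_{1,\lambda},\widehat\Psi_{1,\lambda}]^{-1},I\bigr)P_\lambda$ and the partition of unity $h_\lambda$ from Lemma~\ref{lema2ald}, the dual generator $\widehat\Psi=\sum_\lambda h_\lambda B_\lambda\widehat\Phi$ reproduces $f$ exactly (since $\widehat\Psi_{2,\lambda}$ vanishes on $\mathrm{supp}\,h_\lambda$), the only weight-dependent work being to check $\Psi\in (W^1_\omega)^r$ and $B_\lambda\in\mathcal{WC}^p_\omega$. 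As written, your plan leaves the implication from the frame property back to closedness (equivalently, $ii)\Rightarrow iii)$) unproved, which is the heart of the theorem.
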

\begin{proof}
If the synthesis operator  has $\ell^p_\mu$-stability, then the statement $iv)$ is satisfied. Conversely, if the statement $iv)$ is satisfied, then the right-hand side of (\ref{pro:zariesz}) (with $p=p_0$) immediately follows. Using $c)$ from Theorem~\ref{mala-main}, we obtain the left-hand side of (\ref{pro:zariesz}). Hence, by Theorem \ref{mala-main}, we  have $i)\Leftrightarrow iv)$ and $iv)\Rightarrow  ii)$. The equivalence $iv)\Leftrightarrow v)$ follows from  Lemma~\ref{lema2}.

We follow \cite{ast} to prove $iii)\Rightarrow v)$ and $ii)\Rightarrow iii)$, and carefully check the use of weights.

$\bf{\it{iii)}\Rightarrow
 v)}$. Let $B_\lambda(\xi)=H_\lambda(\xi)\overline{P_\lambda(\xi)^T}\left(\begin{array}{ll}
\Big[\widehat{\Psi}_{1,\lambda},\widehat{\Psi}_{1,\lambda}\Big](\xi)^{-1}& \quad  0\\
\qquad\;\;0&  \quad  \mbox{I}
\end{array}\right)P_\lambda(\xi)$,  for
$h_\lambda(\xi)$, $P_\lambda(\xi)$ and
$\widehat{\Psi}_{1,\lambda}$ as in Lemma \ref{lema2ald}. We have
$B_\lambda(\xi)\in\mathcal{WC}^p_\omega$, for all $p\in[1,+\infty]$.
 Define
$\widehat{\Psi}(\xi)=\sum\limits_{\lambda\in\Lambda}h_\lambda(\xi)B_\lambda(\xi)\widehat{\Phi}(\xi)$.
One has $\Psi\in W^1_\omega$. For any $f\in V^p_\mu(\Phi)$, define
$g(x)=\sum\limits_{i=1}^r\sum\limits_{j\in{\mathbb{Z}}^d}\langle
f,\psi_i(x-j)\rangle\phi_i(x-j)$, $x\in{\mathbb{R}}^d$. Since
$f\in V^p_\mu(\Phi)$, there exists a $2\pi$-periodic distribution
$A(\xi)\in\mathcal{WC}^p_\mu$ such that
$\widehat{f}(\xi)=A(\xi)^T\widehat{\Phi}(\xi)$. By Lemma
\ref{lema2ald}, we have $\widehat{g}(\xi)=\widehat{f}(\xi)$.

Since $\widehat{\Psi}(\xi)=\sum\limits_{\lambda\in\Lambda}h_\lambda(\xi)B_\lambda(\xi)\widehat{\Phi}(\xi)$, for $f=\sum\limits_{i=1}^r\sum\limits_{j\in\mathbb{Z}^d}\langle f,\phi_i(\cdot-j)\rangle\psi(\cdot-j)$  the proof is similar.

 $\bf{\it{ii)}\Rightarrow
 iii)}$. Let $k_0=\min\limits_{\xi\in{\mathbb{R}}^d}\rank\big[\widehat{\Phi}(\xi+2k\pi)_{k\in{\mathbb{Z}}^d}\big]$
and let
\[\Omega_{k_0}=\big\{\xi\in{\mathbb{R}}^d\mid\rank\big[\widehat{\Phi}(\xi+2k\pi)_{k\in{\mathbb{Z}}^d}\big]>k_0\big\}.\]
Then $\Omega_{k_0}\neq{\mathbb{R}}^d$. It is sufficient to prove
that $\Omega_{k_0}=\emptyset$ (see Lemma \ref{lema1ald}).  Suppose that
$\Omega_{k_0}\neq\emptyset$. Since $\Omega_{k_0}$ is open set, then
$\partial\Omega_{k_0}\neq\emptyset$ and
$\rank\big[\widehat{\Phi}(\xi_0+2k\pi)\big]_{k\in{\mathbb{Z}}^d}=k_0$,
for any $\xi_0\in\partial\Omega_{k_0}$, and $\max\limits_{\xi\in
B(\xi_0,\delta)}\rank\big[\widehat{\Phi}(\xi+2k\pi)\big]_{k\in{\mathbb{Z}}^d}>k_0$,
$\delta>0.$ By Lemma \ref{lema2ald}, there exist a nonsingular
$2\pi$-periodic $r\times r$
 matrix $P_{\xi_0}(\xi)$
with all entries in the class $\mathcal{WC}^1_\omega$, $\delta_{0}>0$
and $K_{0}\subset {\mathbb{Z}}^d$ with cardinality $k_0$. Define
$\Psi_{\xi_0}$, $\widehat{\Psi}_{\xi_0}(\xi)$ as in Lemma
\ref{lema2ald}. The construction of
$\Psi_{\xi_0}$ and (\ref{semikon3})  imply $\Psi_{\xi_0}\in
W^1_\omega$.
 Choose $n_0$ such that
$2^{-n_0}<\delta_0$ and define $\alpha_n(\xi)$, $H_{n,\xi_0}(\xi)$ and $\widetilde{H}_{n,\xi_0}(\xi)$ as in \cite{ast}. For any
$2\pi$-periodic distribution $F\in\mathcal{WC}^{p_0}_\mu$ define, $g_n$, for $n\ge n_0+1$, as in \cite{ast}. Note that $g_n\in
V^{p_0}_\mu(\Phi)$ and
$[\widehat{g}_n,\widehat{\Psi}_{1,\xi_0}](\xi)=0$. This leads to
\begin{align*}
\|[\widehat{g}_n,\widehat{\Phi}](\xi)\|_{\ell^{p_0}_{\mu,*}}&\le
C\|g_n\|_{L^{p_0}_\mu}
\|\mathcal{F}^{-1}(H_{n,\xi_0}(\xi)\widehat{\Psi}_{2,\xi_0}(\xi))\|_{\mathcal{L}^\infty_\omega}.
\end{align*}
Using Lemma~\ref{lema3zas}, we obtain
$\lim\limits_{n\rightarrow+\infty}\|\mathcal{F}^{-1}(H_{n,\xi_0}(\xi)\widehat{\Psi}_{2,\xi0}(\xi))
\|_{\mathcal{L}^\infty_\omega}=0$. There exists a
sequence $\rho_n$, $n\ge n_0$, such that
$\|[\widehat{g}_n,\widehat{\Phi}](\xi)\|_{\ell^{p_0}_{\mu,*}}\le
\rho_n\|g_n\|_{L^{p_0}_\mu}$ and $\lim\limits_{n\rightarrow
+\infty}\rho_n=0$. This, together with the assumption {\it{ii}}$)$
and
\[\|[\widehat{g}_n,\widehat{\Phi}](\xi)\|_{\ell^{p_0}_{\mu,*}}=\Big\|\Big\{
\int\limits_{{\mathbb{R}}^d}g_n(\xi)\overline{\Phi(\xi-j)}dx\Big\}_{j\in{\mathbb{Z}}^d}\Big\|_{\ell^{p_0}_\mu}
\ge C\|g_n\|_{L^{p_0}_\mu},\] leads to $g_n=0$, $n\ge
n_0+1$. Then
\begin{equation}\label{5.21}
\widetilde{H}_{n,\xi_0}(\xi)[\widehat{\Psi}_{1,\xi_0},\widehat{\Psi}_{1,\xi_0}](\xi)
(\alpha_n(\xi))^{-1}
\widehat{\Psi}_{1,\xi_0}(\xi)=\widetilde{H}_{n,\xi_0}(\xi)\widehat{\Psi}_{2,\xi_0}(\xi),
\end{equation}
for any $2\pi$-periodic distribution $F\in\mathcal{WC}^{p_0}_\mu$
and $n\ge n_0+1$. We, also, get
\[\widetilde{H}_{n,\xi_0}(\xi)[\widehat{\Psi}_{1,\xi_0},\widehat{\Psi}_{1,\xi_0}](\xi)
(\alpha_n(\xi))^{-1} \widehat{\Psi}_{1,\xi_0}(\xi)=0,\quad \xi\in
B(\xi_0,2^{-n_0-1})+2\pi{\mathbb{Z}}^d.\] So, from (\ref{5.21})
and the fact that it is valid for all $n\ge n_0+1$, we have
$\widehat{\Psi}_{2,\xi_0}(\xi)=0$, $\xi\in
B(\xi_0,2^{-n_0-3})+2\pi{\mathbb{Z}}^d$. This contradicts the fact
that $ \widehat{\Psi}_{2,\xi_0}(\xi)\neq0$, $\forall\xi\in
B(\xi_0,\delta)+2\pi{\mathbb{Z}}^d $, $0<\delta<2\delta_{0}$.

With this we complete the proof $\it{ii)\Rightarrow iii)}$ and the
proof of the theorem.

\end{proof}

\begin{rem}
Note that conditions in Theorem \ref{mala-main} and Theorem \ref{main} do not depend
on $p\in[1,\infty]$, so we obtain the next corollary.
\end{rem}
\begin{cor}\label{cor1}
Let $\Phi\in (W^1_\omega)^r$ and $p_0\in[1,\infty]$.
\begin{itemize}
\item[$i)$] \ If $\{\phi_i(\cdot-j)\mid
j\in{\mathbb{Z}}^d,1\le i\le r\}$ is a $p_0$-frame for
$V^{p_0}_\mu(\Phi)$, then $\{\phi_i(\cdot-j)\mid
j\in{\mathbb{Z}}^d,1\le i\le r\}$ is a $p$-frame for
$V^p_\mu(\Phi)$, for any $p\in[1,\infty]$. \item[$ii)$] \ If
$V^{p_0}_\mu(\Phi)$ is closed in $L^{p_0}_\mu$ and $W^{p_0}_\mu$,
then $V^p_\mu(\Phi)$ is closed in $L^p_\mu$ and $W^p_\mu$, for any
$p\in[1,\infty]$.
\end{itemize}
\end{cor}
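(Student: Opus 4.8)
The plan is to reduce both statements to the $p$-independence already built into Theorem~\ref{mala-main} and Theorem~\ref{main}, so that essentially no new work beyond bookkeeping is required. For part $i)$, I would argue as follows. Suppose $\{\phi_i(\cdot-j)\mid j\in{\mathbb{Z}}^d,\,1\le i\le r\}$ is a $p_0$-frame for $V^{p_0}_\mu(\Phi)$. By the implication $ii)\Rightarrow iii)$ of Theorem~\ref{main}, condition $iii)$ holds, namely the spectral estimate on $[\widehat{\Phi},\widehat{\Phi}](\xi)$. But condition $iii)$ is a pointwise statement about the Gram matrix and involves no reference to $p$ whatsoever. Hence it continues to hold, and applying $iii)\Rightarrow v)\Rightarrow iv)\Rightarrow ii)$ of Theorem~\ref{main} now \emph{at the exponent $p$} (which is legitimate since $\Phi\in(W^1_\omega)^r$ and $\mu$ is $\omega$-moderate, the only hypotheses needed) yields that the same collection is a $p$-frame for $V^p_\mu(\Phi)$. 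I would spell out that the dual family $\Psi\in(W^1_\omega)^r$ produced in $iii)\Rightarrow v)$ depends only on $\Phi$ (through $P_\lambda$, $h_\lambda$, $\widehat{\Psi}_{1,\lambda}$ from Lemma~\ref{lema2ald}), not on $p$, so that literally the same reconstruction formula works for every $p\in[1,\infty]$.

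For part $ii)$, the route is through Theorem~\ref{mala-main}: closedness of $V^{p_0}_\mu(\Phi)$ in $L^{p_0}_\mu$ is equivalent to $\ell^{p_0}_\mu$-stability of the synthesis operator $S_\Phi$ (equivalence $a)\Leftrightarrow b)$), and by Proposition~\ref{pos:sta1} $\ell^{p_0}_\mu$-stability implies $\ell^q_\mu$-stability for every $q\in[1,\infty]$. Running $a)\Leftrightarrow b)$ backwards at exponent $p$ then gives closedness of $V^p_\mu(\Phi)$ in $L^p_\mu$. The $W^p_\mu$ statement is handled by the Remark following Theorem~\ref{mala-main} together with Lemma~\ref{lema3}$(d)$: once $S_\Phi$ has $\ell^p_\mu$-stability, the identity in $v)$ of Theorem~\ref{main} realizes every $f\in V^p_\mu(\Phi)$ as a semi-convolution $\sum_i\phi_i*'c^i$ with $c^i\in\ell^p_\mu$, and Lemma~\ref{lema3}$(d)$ shows this sum lies in $W^p_\mu$ with norm control; closedness in $W^p_\mu$ then follows from the norm equivalence exactly as in the $L^p_\mu$ case, using that on $V^p_\mu(\Phi)$ the $L^p_\mu$ and $W^p_\mu$ norms are equivalent (the upper bound being property $2)$ of Section~\ref{sec:2} applied coordinatewise, the lower bound being $\|\cdot\|_{L^p_\mu}\le\|\cdot\|_{W^p_\mu}$).

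I do not anticipate a serious obstacle here: the corollary is genuinely a corollary, and the content is entirely in observing that the \emph{pivot} conditions — $iii)$ of Theorem~\ref{main} and $\ell^q_\mu$-stability for all $q$ from Proposition~\ref{pos:sta1} — are $p$-free. The one point that deserves care, and which I would state explicitly rather than leave implicit, is that the auxiliary objects constructed along the way ($\Psi$, $P_\lambda$, $h_\lambda$, and the slant matrices) are manufactured from $\Phi$ and $\omega$ alone; their membership in $(W^1_\omega)^r$ and $\mathcal{WC}^1_\omega$ is uniform in $p$, so transferring the frame/closedness conclusion across exponents does not require re-deriving any estimate. Thus the proof is a short chain of citations: part $i)$ via $ii)\Rightarrow iii)\Rightarrow ii)$ in Theorem~\ref{main} with the exponent switched in the middle, and part $ii)$ via $b)\Rightarrow a)\Rightarrow(\text{all }q)\Rightarrow b)$ in Theorem~\ref{mala-main} together with Lemma~\ref{lema3}$(d)$ for the $W^p_\mu$ refinement.
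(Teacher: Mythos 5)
Your proposal is correct and follows essentially the same route as the paper: the paper derives Corollary~\ref{cor1} precisely from the observation (stated in the preceding remark) that the pivot conditions — condition $iii)$ of Theorem~\ref{main} and the $\ell^q_\mu$-stability for all $q$ supplied by Proposition~\ref{pos:sta1} — do not depend on $p\in[1,\infty]$, which is exactly your chain $ii)\Rightarrow iii)\Rightarrow\cdots\Rightarrow ii)$ with the exponent switched in the middle and $b)\Rightarrow a)\Rightarrow b)$ at the new exponent. Your extra bookkeeping for the $W^p_\mu$ statement via Lemma~\ref{lema3}$(d)$ and the inclusion $W^p_\mu\subset L^p_\mu$ is sound and only spells out what the paper leaves implicit.
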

\begin{rem}
$(v)\Rightarrow(ii)$ implies that  $\{\psi_i(\cdot-j)\mid
1\le i\le r,j\in\mathbb{Z}^d\}$
 is a dual $p$-frame of  $\{\phi_i(\cdot-j)\mid 1\le i\le r,j\in\mathbb{Z}^d\}$. So, the
 $p$-frame for $V^{p}_\mu(\Phi)$ is a Banach frame (with respect to
 $\ell^{p}_\mu$).
\end{rem}

\bigskip
\section{Connections with periodic distributions}\label{sec:4}

\indent

We will use the notation $V^p_s$ instead of
$V^p_{(1+|x|^2)^{s/2}}$ (similarly for $\ell^p_s$). Since
$\ell^p_s$ and $V^p_s$ are isomorphic Banach spaces for all
$s\ge 0$ and $p\in[1,\infty]$, we have
$V^p_{s_1}(\Phi)\subset V^p_{s_2}(\Phi)$ for $0\le
s_2\le s_1$, $p\in[1,\infty]$. We define Fr\' echet spaces
$X_{F,p}$, $p\in[1,\infty]$, as $\smash
X_{F,p}=\bigcap\limits_{s\in{\mathbb{N}}_0}V^p_s(\Phi)$. Clearly,
$X_{F,p}$ is dense in $V^p_s(\Phi)$ for all $s\in{\mathbb{N}}_0$.
The corresponding sequence space is
$Q_{F,p}=\bigcap\limits_{s\in{\mathbb{N}}_0}\ell^p_s$,
 $p\in[1,\infty]$, which is the space of rapidly decreasing sequences $s$. By Corollary
\ref{cor1} it follows that the definition of $X_{F,p}$ does not
depend on $p\in[1,\infty]$. So we use  notation $X_F$, $Q_F$
instead of $X_{F,p}$, $Q_{F,p}$. The set $\{\Phi(\cdot-k)\mid
k\in{\mathbb{Z}}^d\}$ forms a $F$-frame for $X_{F}$ since it forms
a Banach frame for every space in the intersection (see
\cite{suza} for the definition).

 Since the corresponding function space for $s$ is the
space of rapidly decreasing functions $\mathcal{S}=\{f\mid
\|f\|_m=\sup\limits_{n\le
m}(1+|x|^2)^{m/2}|f^{(n)}(x)|<+\infty\}$, and its dual is
$\mathcal{S}'$- the space of tempered  distributions, we obtain
that the dual space $X_F'$ is isomorphic to (a complemented
subspace of) $\mathcal{S}'$.

Denote by $\mathcal{P}(-\pi,\pi)$ the space of smooth $2\pi$-
periodic functions on $\mathbb{R}^d$ with the family of norms
$|\theta|_k=\sup\{|\theta^{(k)}(t)|\,; t\in(-\pi,\pi)\}$,
$k\in{\mathbb{N}}_0$. It is a Fr\' echet space and its dual is the
space of $2\pi$-periodic tempered distributions. We say that $T$
is a $2\pi$-periodic distribution if it is a tempered distribution
on $\mathbb{R}^d$ and $T=T(\cdot+2j\pi)$, for all
$j\in\mathbb{Z}^d$. Denote by $\mathcal{P}'(-\pi,\pi)$ the space
of periodic tempered distributions (see \cite{rss}).  Recall that
$\mathcal F(h)=\hat{h}=\int_{{\mathbb{R}}^d}\e^{-2\pi{\sqrt{-1}}
t\cdot}h(t)\D t$ for $h\in L^1.$
\begin{thm} Let
$\Phi=(\phi_1,\ldots,\phi_r)^T\in\bigcap\limits_{s\ge
0}(W^1_s)^r$ and $\Psi=(\psi_1,\ldots,\psi_r)^T$ be its dual frame
$($according to $v)$ of Theorem \ref{main}$)$.
 Then
\[X_F={\mathcal F}^{-1}\Big(\sum_{i=1}^r\hat{\phi_i}\cdot\mathcal{P}(-\pi,\pi)\Big),\qquad
X_F'={\mathcal
F}^{-1}\Big(\sum_{i=1}^r\hat{\psi_i}\cdot\mathcal{P}'(-\pi,\pi)\Big)
\]
in the topological sense. Let
\[f=\sum\limits_{k=1}^r\sum\limits_{p\in{\mathbb{Z}}^d}c_p^k\phi_k(\cdot-p)\in
X_F\;\;\mbox{and}\;\;F=\sum
\limits_{i=1}^r\sum\limits_{j\in{\mathbb{Z}}^d}d_j^i\psi_i(\cdot-j)\in
X_F'.\] The dual pairing is given by
\begin{equation}\label{dualpar}\langle F , f \rangle =\sum\limits_{i=1}^r\sum\limits_{k=1}^r\Big\langle \widehat{\psi}_i(\xi)\widehat{\phi}_k(-\xi)\sum
\limits_{j\in{\mathbb{Z}}^d}d^i_j\e^{2\pi
j\xi\sqrt{-1}},\sum\limits_{p\in{\mathbb{Z}}^d}c^k_p\e^{-2\pi
p\xi\sqrt{-1}} \Big\rangle,
\end{equation} where $f=\sum\limits_{k=1}^r\sum\limits_{p\in{\mathbb{Z}}^d}c_p^k\phi_k(\cdot-p)\in X_F$ and $F=\sum
\limits_{i=1}^r\sum\limits_{j\in{\mathbb{Z}}^d}d_j^i\psi_i(\cdot-j)\in
X_F'$.

\noindent In particular, we have
$\DS\int\limits_{{\mathbb{R}}^d}{\varphi_i}{\psi_k}\D t=\DS\int\limits_{{\mathbb{R}}^d}\widehat{\varphi_i}(\xi)\widehat{\psi_k}(-\xi)\D \xi=\delta_{ik}$,
$1\le i,k\le r$.
\end{thm}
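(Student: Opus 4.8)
The goal is to prove the structural identities $X_F = \mathcal F^{-1}\big(\sum_i \hat\phi_i\cdot\mathcal P(-\pi,\pi)\big)$ and $X_F' = \mathcal F^{-1}\big(\sum_i \hat\psi_i\cdot\mathcal P'(-\pi,\pi)\big)$ together with the dual-pairing formula \eqref{dualpar}. I would organize the argument around the isomorphism between $\ell^p_s$ and its Fourier image. First I would recall that, since $\Phi\in\bigcap_{s\ge0}(W^1_s)^r$ and $\mu=\omega_s$ is $\omega_s$-moderate, Theorem~\ref{main} applies at every level $s$, so the dual frame $\Psi=(\psi_1,\dots,\psi_r)^T$ exists and belongs to $\bigcap_{s\ge 0}(W^1_s)^r$ as well; in particular every $f\in X_F$ has the expansion $f=\sum_{k}\sum_p c_p^k\phi_k(\cdot-p)$ with $\{c_p^k\}_p\in Q_F$ for each $k$, and this coefficient map is a topological isomorphism $X_F\to (Q_F)^r / \ker S_\Phi$ by Corollary~\ref{cor1} and the open mapping theorem for Fréchet spaces.

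The key computational step is to pass to the Fourier side. Taking Fourier transforms of $f=\sum_k\sum_p c_p^k\phi_k(\cdot-p)$ gives $\hat f(\xi)=\sum_{k=1}^r\Big(\sum_{p\in\mathbb Z^d}c_p^k\,\e^{-2\pi p\xi\sqrt{-1}}\Big)\hat\phi_k(\xi)$, and the point is that a sequence $\{c_p^k\}_p$ lies in $Q_F$ (rapidly decreasing) if and only if its symbol $C_k(\xi)=\sum_p c_p^k\e^{-2\pi p\xi\sqrt{-1}}$ lies in $\mathcal P(-\pi,\pi)$ (smooth $2\pi$-periodic) — this is the classical identification of rapidly decreasing Fourier coefficients with smooth periodic functions, which I would cite and which makes the two topologies match. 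This yields the first identity $X_F=\mathcal F^{-1}(\sum_i\hat\phi_i\cdot\mathcal P(-\pi,\pi))$ "in the topological sense", i.e. as an equality of Fréchet spaces with equivalent families of seminorms, the equivalence coming from the frame inequalities of Theorem~\ref{main} holding uniformly in $s$. For the dual statement I would dualize: $X_F'$ is the strong dual, and since the coefficient map realizes $X_F$ as a complemented subspace of $(Q_F)^r\cong (\mathcal P(-\pi,\pi))^r$, its dual is the corresponding quotient of $(Q_F')^r\cong(\mathcal P'(-\pi,\pi))^r$; writing a functional $F\in X_F'$ via the dual frame as $F=\sum_i\sum_j d_j^i\psi_i(\cdot-j)$ with $\{d_j^i\}_j$ a tempered (slowly increasing) sequence, i.e. $D_i(\xi)=\sum_j d_j^i\e^{2\pi j\xi\sqrt{-1}}\in\mathcal P'(-\pi,\pi)$, gives $\hat F=\sum_i D_i\,\hat\psi_i$ and hence the second identity.

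For the pairing formula, I would compute $\langle F,f\rangle$ directly: expanding $F$ and $f$ in the biorthogonal-type system and using the Plancherel/Parseval relation on $\mathbb R^d$,
\[
\langle F,f\rangle=\sum_{i=1}^r\sum_{k=1}^r\sum_{j,p}d_j^i\,\overline{c_p^k}\int_{\mathbb R^d}\psi_i(x-j)\overline{\phi_k(x-p)}\D x,
\]
then rewriting $\int\psi_i(x-j)\overline{\phi_k(x-p)}\D x=\int\hat\psi_i(\xi)\overline{\hat\phi_k(\xi)}\,\e^{2\pi(j-p)\xi\sqrt{-1}}\D\xi$ and regrouping the lattice sums into the periodic symbols $\sum_j d_j^i\e^{2\pi j\xi\sqrt{-1}}$ and $\sum_p c_p^k\e^{-2\pi p\xi\sqrt{-1}}$ produces exactly \eqref{dualpar}; the special case $\int\varphi_i\psi_k\D t=\delta_{ik}$ then follows by taking $f=\phi_k(\cdot)$, whose coefficient symbol is the constant $1$, and reading off biorthogonality of $\{\phi_i\}$ and $\{\psi_i\}$ from $v)$ of Theorem~\ref{main}. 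The main obstacle, and the place where care is needed, is justifying the interchange of the (infinite, lattice) summations with the integral and making the "topological sense" precise: one must check that the series defining $\hat f$ and $\hat F$ converge in the appropriate space and that the symbol map $\{c_p\}_p\mapsto\sum_p c_p\e^{-2\pi p\xi\sqrt{-1}}$ is a topological isomorphism $Q_F\to\mathcal P(-\pi,\pi)$ with the correct dual behavior — this is where the uniform-in-$s$ frame bounds from Theorem~\ref{main} and Corollary~\ref{cor1} do the real work, guaranteeing that the Fréchet (resp. $(DF)$) topologies on both sides agree rather than merely that the underlying sets coincide.
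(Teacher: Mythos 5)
Your proposal follows essentially the same route as the paper: the identification of rapidly decreasing (resp.\ slowly increasing) coefficient sequences with smooth $2\pi$-periodic functions in $\mathcal{P}(-\pi,\pi)$ (resp.\ periodic distributions in $\mathcal{P}'(-\pi,\pi)$) gives the two structural identities, the Parseval relation $\langle F,f\rangle=\langle\widehat F(\xi),\widehat f(-\xi)\rangle$ yields \eqref{dualpar} after regrouping the lattice sums into periodic symbols, and the biorthogonality $\int\widehat{\varphi_i}(\xi)\widehat{\psi_k}(-\xi)\,\mathrm{d}\xi=\delta_{ik}$ is obtained exactly as in the paper by specializing the coefficients to Kronecker deltas and invoking $v)$ of Theorem~\ref{main}; your extra care about the topological isomorphism (open mapping theorem, complemented subspace) only fills in details the paper leaves implicit. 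The one small discrepancy is your insertion of complex conjugates ($\overline{c_p^k}$, $\overline{\phi_k(x-p)}$) in the pairing computation: the paper's dual pairing is bilinear, which is why \eqref{dualpar} contains $\widehat{\phi}_k(-\xi)$ rather than $\overline{\widehat{\phi}_k(\xi)}$, so the conjugates should simply be dropped.
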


\begin{proof} Since $\sum\limits_{p\in{\mathbb{Z}}^d}c^k_p\e^{2\pi{\sqrt{-1}}p\xi}\in\mathcal{P}(-\pi,\pi)$, we obtain the structure of $f\in X_F$ as in the theorem. The same explanation works for $X_F'$.

 By the fact that $\langle F(x),f(x)\rangle=\langle \widehat{F}(\xi),\widehat{f}(-\xi)\rangle$, we have that (\ref{dualpar}) fo\-llows.

Let $d^i_0=\delta_{ik}$, $i=1,\ldots,r$, and $d^i_j=0$, $j\neq 0$,
and, also, let $c^k_0=\delta_{ik}$ for $k=1,\ldots,r$ and
$c^k_p=0$, $p\neq0$. Using that, we obtain \[
\langle
F(\xi),f(\xi)\rangle=\sum\limits_{i=1}^r\sum\limits_{k=1}^r
 \langle \widehat{\psi}_i(\xi),\widehat{\phi}_k(-\xi)d_0^i,c_0^k
\rangle=
\int\limits_{{\mathbb{R}}^d}\widehat{\psi}_{k_0}(\xi)\widehat{\phi}_{k_0}(-\xi)\D\xi,\,1\le
k_0\le r.
\]

On the other hand $f(x)= \langle f(x),\psi_{k_0}(x)\rangle
\phi_{k_0}(x)$ and $f=\phi_{k_0}$ for some $1\le
k_0\le r$, so we obtain $\langle f,\psi_{k_0}\rangle=1$.
Since $F=\psi_{k_0}$, we get $\langle F,f\rangle= \langle
f,\psi_{k_0}\rangle=1$. Finally, we have
$\DS\int\limits_{{\mathbb{R}}^d}\widehat{\varphi_i}({\xi})\widehat{\psi_k}(-\xi)\D\xi=\delta_{ik}$,
$1\le i,k\le r$.
\end{proof}

Let $\beta\in(0,1)$. Now, we consider weights
$\mu_k=\e^{k|x|^\beta}$, $k\in{\mathbb{N}}$, and the co\-rresponding
spaces $V^p_{\mu_k}(\Phi)$ and their intersection
$X^{(\beta)}_{F,p}=\bigcap\limits_{k\in{\mathbb{N}}}V^p_{\mu_k}(\Phi)$.
It is a Fr\' echet space not depending on $p$, so we use notation
$X_F^{(\beta)}$. The corresponding sequence space is
$s^{(\beta)}=\bigcap\limits_{k\in{\mathbb{N}}}\ell^p_{\mu_k}$, i.e., the
space of subexponentially rapidly decreasing sequences determining
the space of periodic tempered ultradistributions via the mapping
$s^{(\beta)}\ni(a_j)_{j\in{\mathbb{Z}}^d}\leftrightarrow\sum\limits_{j\in{\mathbb{Z}}^d}a_j\e^{j\xi\sqrt{-1}}\in\mathcal{P}
(-\pi,\pi)$ (see \cite{uldist}).

\section{Construction of $p$-frames}\label{sec:6}
\indent
Let ${\theta}$ be a smooth non negative function such that
${\theta}(x)=1$, $x\in[-\pi+\varepsilon,\pi-\varepsilon]$, for
$0<\varepsilon<\frac14$, and $\mbox{supp}\,
{\theta} \subseteq[-\pi,\pi]$. Let $
\phi_k(x)=\mathcal{F}^{-1}(\theta(\cdot+k\pi))(x)$,
$x\in{\mathbb{R}}$,$\;$ $k\in{\mathbb{Z}}$. We can divide every $\theta(\cdot+k\pi)$ with the sum $\sum_{k\in\mathbb{Z}}\theta(\cdot+k\pi)$ in order to obtain the partition of unity. By the Paley-Wiener theorem, we have that
 $\phi_k\in
W^1_\mu({\mathbb{R}})$, $k\in{\mathbb{Z}}$. We say
that set $\{\phi_{i_1},\phi_{i_2},\ldots,\phi_{i_r}\}$, $i_1<i_2<\cdots<i_r$, is a set of
$r$ successive functions if $i_n=i_1+(n-1)$, $n=2,\ldots,r$. Note that for every $\xi\in\mathbb{R}$ there exist $\xi_0\in(-\pi,\pi)$ and $k\in\mathbb{Z}$ such that $\xi=\xi_0+k\pi$.

Now, we consider the following three cases.

\noindent $1^\circ$ The case of two successive functions.

If $\Phi=(\phi_i,\phi_{i+1})^T$, $i\in{\mathbb{Z}}$,
then
 $\rank[\widehat{\Phi}(\xi+2j\pi)_{j\in{\mathbb{Z}}}]$, $\xi\in{\mathbb{R}}$, is not a constant function on $\mathbb{R}$.
In this case, for the matrix $[\widehat{\Phi}(\xi+2j\pi)_{j\in{\mathbb{Z}}}]$, we obtain the $2\times \infty$ matrix
\[A(\xi_0)=\left[\begin{array}{llllll}
\cdots  &0 &\alpha^{\xi_0}_0     &0                         &0  \cdots  \\
\cdots  &0 &\alpha^{\xi_0}_{-1}  &\alpha^{\xi_0}_{1}        &0   \cdots
\end{array}\right],\] which depends on
$\xi_0\in(-\pi,\pi)$,  where $\alpha^{\xi_0}_{-1}=\theta(\xi_0-\pi)$, $\alpha^{\xi_0}_0=\theta(\xi_0)$ and $\alpha^{\xi_0}_1=\theta(\xi_0+\pi)$.

 For $\xi_0^1=\frac{\pi}{2}$, we have $\alpha^{\xi_0^1}_0\neq0$, $\alpha^{\xi_0^1}_{-1}\neq0$, and for $\xi_0^2=-\frac{\pi}{2}$, we have $\alpha^{\xi_0^2}_0\neq0$, $\alpha^{\xi_0^2}_1\neq0$. Since $\rank A(\xi_0^1)=1$ and $\rank A(\xi_0^2)=2$, we conclude that for successive functions $\phi_i,\phi_{i+1}$, $i\in\mathbb{Z}$, the rank of the matrix $[\widehat{\Phi}
(\xi+2j\pi)_{j\in{\mathbb{Z}}}]$ is not a constant function on $\mathbb{R}$.

\noindent $2^\circ$ The case of three successive functions.

If $\Phi=(\phi_i,\phi_{i+1},\phi_{i+2})^T$,
$i\in{\mathbb{Z}}$,
 then $\rank[\widehat{\Phi}(\xi+2j\pi)_{j\in{\mathbb{Z}}}]$
 is a constant function on ${\mathbb{R}}$. We have that
$\rank[\widehat{\Phi}
(\xi+2j\pi)_{j\in{\mathbb{Z}}}]=2$, for all $\xi\in\mathbb{R}$.

Indeed, the matrix $[\widehat{\Phi}(\xi+2j\pi)_{j\in{\mathbb{Z}}}]$, $\xi\in\mathbb{R}$, is $3\times \infty$ matrix
\[B(\xi_0)=\left[\begin{array}{llllll}
\cdots  &0 &\alpha^{\xi_0}_0       &0                  &0 &\cdots  \\
\cdots  &0 &\alpha^{\xi_0}_{-1}    &\alpha^{\xi_0}_{1} &0 &\cdots \\
\cdots  &0 &0                      &\alpha^{\xi_0}_{0} &0 &\cdots
\end{array}\right],\]  which depends on
$\xi_0\in(-\pi,\pi)$,  where $\alpha^{\xi_0}_{-1}=\theta(\xi_0-\pi)$, $\alpha^{\xi_0}_0=\theta(\xi_0)$ and $\alpha^{\xi_0}_1=\theta(\xi_0+\pi)$.
Since, $\theta(\xi_0)\neq0$ for all $\xi_0\in(-\pi,\pi)$, the matrix $B(\xi_0)$ has $2$ columns  with non-zero elements for all $\xi_0\in(-\pi,\pi) $. So, $\rank[\widehat{\Phi}
(\xi+2j\pi)_{j\in{\mathbb{Z}}}]$ is a constant function on $\mathbb{R}$ and $\rank[\widehat{\Phi}
(\xi+2j\pi)_{j\in{\mathbb{Z}}}]=2$, for all $\xi\in\mathbb{R}$.

$3^\circ$ The case of $r>3$ successive functions.

By taking $r+1$ successive functions
$\phi_i,\phi_{i+1},\ldots,\phi_{i+r}$, $r>2$, we have different situations
described in the next lemma.
\begin{lem}\label{zaparne}
$a)$ \ If $\Phi=(\phi_i,\phi_{i+1},\ldots,\phi_{i+r})^T$, for
$i\in{\mathbb{Z}}$, $r\in2\mathbb{N}+1$,   then
$\rank[\widehat{\Phi}(\xi+2j\pi)_{j\in{\mathbb{Z}}}]$ is not a constant
function on ${\mathbb{R}}$.

$b)$ \ If $\Phi=(\phi_i,\phi_{i+1},\ldots,\phi_{i+r})^T$, $i\in{\mathbb{Z}}$,
$r\in2{\mathbb{N}}$,   then
$\rank[\widehat{\Phi}(\xi+2j\pi)_{j\in{\mathbb{Z}}}]$ is a constant
function on ${\mathbb{R}}$ and  we have, for all $\xi\in\mathbb{R}$, and $r=2n$, $n\in\mathbb{N}$,
$\rank[\widehat{\Phi}
(\xi+2j\pi)_{j\in{\mathbb{Z}}}]=n+1$.
\end{lem}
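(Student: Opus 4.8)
The plan is to read off the exact zero/non-zero pattern of the $(r+1)\times\infty$ matrix $A(\xi):=[\widehat{\Phi}(\xi+2j\pi)]_{i\le m\le i+r,\,j\in\mathbb{Z}}$ and reduce $\rank A(\xi)$ to an elementary lattice-point count. Given $\xi\in\mathbb{R}$, choose $k\in\mathbb{Z}$ with $\xi_0:=\xi-k\pi\in(-\pi,\pi]$. Since $\widehat{\phi}_m(\eta)=\theta(\eta+m\pi)$, the $(m,j)$-entry of $A(\xi)$ equals $\theta(\xi_0+(k+m+2j)\pi)$. Because $\operatorname{supp}\theta\subseteq[-\pi,\pi]$ and $\theta>0$ on $(-\pi,\pi)$, this entry can be non-zero only when $k+m+2j\in\{-1,0,1\}$, and among these $\theta(\xi_0)\ne0$ always, $\theta(\xi_0-\pi)\ne0$ exactly for $\xi_0\in(0,\pi)$, and $\theta(\xi_0+\pi)\ne0$ exactly for $\xi_0\in(-\pi,0)$. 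Since $k+m+2j\equiv k+m\pmod2$, in each row $m$ at most one of $-1,0,1$ is attainable, and for $\xi_0\ne0$ it carries a non-zero value; hence \emph{every row of $A(\xi)$ has exactly one non-zero entry}. A short check shows that any two distinct non-zero columns of $A(\xi)$ have disjoint row-supports, so $\rank A(\xi)$ equals the number of columns of $A(\xi)$ that contain a non-zero entry.

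Next I would locate that single entry. For $\xi_0\in(0,\pi)$ the non-zero entry of row $m$ sits in column $-(k+m)/2$ when $k+m$ is even and in column $-(k+m+1)/2$ when $k+m$ is odd, i.e.\ always in column $-\lceil(k+m)/2\rceil$; for $\xi_0\in(-\pi,0)$ it sits in column $-\lfloor(k+m)/2\rfloor$. Writing $a:=k+i$, it follows that $\rank A(\xi)=\#\{\lceil s/2\rceil:\ a\le s\le a+r\}$ for $\xi_0>0$ and $\rank A(\xi)=\#\{\lfloor s/2\rfloor:\ a\le s\le a+r\}$ for $\xi_0<0$. As $s\mapsto\lceil s/2\rceil$ is non-decreasing with unit jumps, the number of distinct values it attains on the $r+1$ consecutive integers starting at $a$ equals $\lceil(a+r)/2\rceil-\lceil a/2\rceil+1$, and likewise for $\lfloor\cdot\rfloor$.

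Both parts now follow. If $r=2n$, then $\lceil(a+2n)/2\rceil-\lceil a/2\rceil=n$ for every $a\in\mathbb{Z}$ (and the same for $\lfloor\cdot\rfloor$), so $\rank A(\xi)=n+1$ for every $\xi$ with $\xi_0\ne0$, independently of $a$; together with a direct inspection of the slices $\xi\in\pi\mathbb{Z}$ (as in case $2^\circ$) this gives $\rank[\widehat{\Phi}(\xi+2j\pi)_{j\in\mathbb{Z}}]\equiv n+1$, which is $b)$. If $r=2n+1$, the same count yields $\lceil(a+2n+1)/2\rceil-\lceil a/2\rceil=n+1$ when $a$ is even and $=n$ when $a$ is odd; since $a=k+i$ with $k$ ranging over $\mathbb{Z}$, $\rank A(\xi)$ takes the two distinct values $n+1$ and $n+2$ (for instance at $\xi=\pi/2$ and at $\xi=\pi/2+\pi$), so it is not constant on $\mathbb{R}$, which is $a)$.

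No hard estimate is needed; the work is organizational. The step I would write out most carefully is the identification of the column carrying each row's unique non-zero entry --- the dichotomy $-\lceil(k+m)/2\rceil$ versus $-\lfloor(k+m)/2\rfloor$ --- together with the disjoint-support observation that turns the column count into the rank. The genuinely delicate points are the slices $\xi\in\pi\mathbb{Z}$, where $\theta(\pm\pi)=0$ annihilates every row $m$ with $k+m$ odd, so that $\rank A(\xi)$ reduces there to the number of $m\in\{i,\dots,i+r\}$ with $k+m$ even; this is the one place that is not mere bookkeeping, and --- since the non-constancy in $a)$ is already exhibited by the points $\pi/2$ and $\pi/2+\pi$ off $\pi\mathbb{Z}$ --- it concerns only the constancy claim $b)$. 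Once these slices are settled, the whole lemma reduces to the remark that $\lceil(a+r)/2\rceil-\lceil a/2\rceil$ is independent of $a$ precisely when $r$ is even.
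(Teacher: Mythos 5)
Your counting for $\xi_0\neq0$ is correct and is essentially the paper's own argument in a tidier form: the paper likewise reads off the banded pattern with the three values $\theta(\xi_0)$, $\theta(\xi_0-\pi)$, $\theta(\xi_0+\pi)$, identifies the rank with the number of columns containing a nonzero entry, and exhibits non-constancy in $a)$ by evaluating at $\pm\pi/2$ (equivalently your pair $\pi/2$ and $\pi/2+\pi$); your $\lceil\cdot\rceil/\lfloor\cdot\rfloor$ bookkeeping has the merit of treating both parities of $k+i$ at once, whereas the paper's displayed matrices fix one alignment. Part $a)$ is complete as you give it, since at $\xi=\pi/2$ and $\xi=3\pi/2$ the surviving entries are actually equal to $1$. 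Two small wording points: for $k+m$ odd both values $\pm1$ of $k+m+2j$ are attainable (in different columns); what is true, and what you need, is that at most one of $\theta(\xi_0-\pi)$, $\theta(\xi_0+\pi)$ is nonzero. And ``$\theta(\xi_0-\pi)\neq0$ exactly for $\xi_0\in(0,\pi)$'' presupposes $\theta>0$ on all of $(-\pi,\pi)$, which is not literally among the hypotheses ($\theta=1$ only on $[-\pi+\varepsilon,\pi-\varepsilon]$), though the paper makes the same implicit assumption.

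The genuine gap is in $b)$, at precisely the slices you defer. You assert that ``a direct inspection of $\xi\in\pi\mathbb{Z}$ (as in case $2^\circ$)'' yields rank $n+1$ there, but your own reduction contradicts this: since $\theta$ is smooth with support in $[-\pi,\pi]$, we have $\theta(\pm\pi)=0$, so at $\xi=k\pi$ every row with $k+m$ odd vanishes identically and the rank equals the number of $m\in\{i,\dots,i+2n\}$ with $k+m$ even. That count is $n+1$ when $k+i$ is even but only $n$ when $k+i$ is odd; concretely, for $\Phi=(\phi_0,\phi_1,\phi_2)$ at $\xi=\pi$ the only nonzero entry of the whole matrix is $\theta(0)$ in the $\phi_1$-row, so the rank is $1$, not $2$. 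Hence the deferred step cannot be settled by inspection, and the constancy claimed in $b)$ actually fails on one coset of $2\pi\mathbb{Z}$ inside $\pi\mathbb{Z}$ under the stated hypotheses on $\theta$. The paper's proof does not address this either: its displayed matrix covers only the alignment in which the $\theta(\xi_0)$-entries occupy the $n+1$ ``outer'' rows, and it never examines $\xi_0=0$ with the opposite parity. So you have in fact located a real defect of the lemma, which is to your credit, but your write-up then papers over it with an unjustified (and false) assertion instead of either restricting the constancy claim to $\mathbb{R}\setminus\pi\mathbb{Z}$ (or an almost-everywhere statement) or noting that no admissible $\theta$ can remove the drop, since smoothness and the support condition force $\theta(\pm\pi)=0$.
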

\begin{proof}
Since supports of products
$\widehat{\phi}_{i_{1}}(\xi+2j_1\pi)\widehat{\phi}_{i_{2}}(\xi+2j_2\pi)$
are non-empty if the arguments are of the form $\xi-\pi$, $\xi$, $\xi+\pi$, modulo $2j\pi$, $j\in\mathbb{Z}$, we have
that only blocks with elements \[\left[\begin{array}{ll}
\theta(\xi)\;\;&\theta(\xi+2\pi)\\\theta(\xi-\pi)\;\;&\theta(\xi+\pi)\end{array}\right]\quad\mbox{or}\quad\left[\begin{array}{ll}
\theta(\xi-\pi)\;\;&\theta(\xi+\pi)\\\theta(\xi-2\pi)\;\;&\theta(\xi )\end{array}\right],\] can
determine the rank of the matrix $[\widehat{\Phi}(\xi+2j\pi)_{j\in{\mathbb{Z}}}]$. For any other choice of $2\times 2$
matrix, we get determinant equal $0$.

$(a)$ Let $\Phi=(\phi_i,\phi_{i+1},\ldots,\phi_{i+(2n-1)})^T$,
$n\in\mathbb{N}$.

For the matrix $[\widehat{\Phi}(\xi+2j\pi)_{j\in{\mathbb{Z}}}]$, we obtain the $r\times \infty$ matrix
\[A_r(\xi_0)=\left[\begin{array}{llllllllll}
\cdots   &\alpha^{\xi_0}_0     &0                             &0 &0&\cdots     &0                 &0               &\cdots\\
\cdots  &\alpha^{\xi_0}_{-1} &\alpha^{\xi_0}_{1}             &0 &0&\cdots     &0                 &0              &\cdots\\
\cdots   &0                &\alpha^{\xi_0}_{0}                    &0&0&\cdots      &0                 &0               &\cdots\\
\cdots   &0                &\alpha^{\xi_0}_{-1}&\alpha^{\xi_0}_{1} &0&\cdots     &0                &0                &\cdots\\
\cdots   &0                &0&\alpha^{\xi_0}_{0} &0&\cdots     &0                &0                &\cdots\\
\vdots &\vdots              &\vdots                       &\vdots  &\vdots&\cdots\;   &\vdots                         &\vdots&\vdots \\
\cdots   &0                 &0               &0                &0 &\cdots    &\alpha^{\xi_0}_{0}\;      &0               &\cdots\\
\cdots   &0                &0                &0               &0 &\cdots     & \alpha^{\xi_0}_{-1}\; &\alpha^{\xi_0}_{1}  &\cdots
\end{array}\right],\]
where $\alpha^{\xi_0}_{-1}=\theta(\xi_0-\pi)$, $\alpha^{\xi_0}_0=\theta(\xi_0)$ and $\alpha^{\xi_0}_1=\theta(\xi_0+\pi)$, $\xi_0\in(-\pi,\pi)$.

For $\xi_0^1=\frac{\pi}{2}$, we have $\alpha^{\xi_0^1}_0\neq0$, $\alpha^{\xi_0^1}_{-1}\neq0$, and for $\xi_0^2=-\frac{\pi}{2}$,  we obtain $\alpha^{\xi_0^2}_0\neq0$, $\alpha^{\xi_0^2}_1\neq0$. Since $\rank A_r(\xi_0^1)=n$ and $\rank A_r(\xi_0^2)=n+1$, we conclude that for even number of successive functions $\phi_i,\phi_{i+1},\ldots,\phi_{i+(2n-1)}$, $i\in{\mathbb{Z}}$, $n\in\mathbb{N}$,  the rank of the matrix $[\widehat{\Phi}
(\xi+2j\pi)_{j\in{\mathbb{Z}}}]$ is not a constant function on $\mathbb{R}$.

$(b)$ Let $\Phi=(\phi_i,\phi_{i+1},\ldots,\phi_{i+2n})^T$,
$i\in {\mathbb{Z}} $, $n\in\mathbb{N}$. The matrix
\[[\widehat{\Phi}(\xi+2j\pi)_{j\in{\mathbb{Z}}}]=\left[\begin{array}{llllllllll}
\cdots   &0   &\alpha^{\xi_0}_{0}   &0                  &0&0&\cdots      &0                 &0               &\cdots\\
\cdots  &0 &\alpha^{\xi_0}_{-1} &\alpha^{\xi_0}_{1}             &0 &0&\cdots     &0                 &0              &\cdots\\
\cdots    &0&0                &\alpha^{\xi_0}_{0}                    &0&0&\cdots      &0                 &0               &\cdots\\
\cdots    &0&0                &\alpha^{\xi_0}_{-1}&\alpha^{\xi_0}_{1} &0&\cdots     &0                &0                &\cdots\\
\vdots &\vdots              &\vdots                       &\vdots  &\vdots&\cdots\;   &\vdots                         &\vdots&\vdots \\
\cdots    &0&0                &0                &0               &0 &\cdots     & \alpha^{\xi_0}_{-1}\; &\alpha^{\xi_0}_{1}  &\cdots\\
\cdots    &0&0                 &0               &0                &0 &\cdots    &0 &\alpha^{\xi_0}_{0}\;                    &\cdots
\end{array}\right],\]  has the constant rank on $\mathbb{R}$. Indeed, since $\alpha^{\xi_0}_{0}\neq0$ for all $\xi_0\in(-\pi,\pi)$, the matrix $[\widehat{\Phi}
(\xi+2j\pi)_{j\in{\mathbb{Z}}}]$ has $n+1$ columns  with non-zero elements for all $\xi\in\mathbb{R}$ and $\rank[\widehat{\Phi}
(\xi+2j\pi)_{j\in{\mathbb{Z}}}]=n+1$, for all $\xi\in\mathbb{R}$.

\end{proof}

As a consequence of Corollary \ref{main} and Lemma \ref{zaparne},
$1^\circ$ we have the next result.
\begin{thm}
Let $\Phi=(\phi_i,\phi_{i+1},\ldots,\phi_{i+2n})^T$, for $i\in\mathbb{Z}$,
$n\in{\mathbb{N}}$. Then $V^p_\mu(\Phi)$ is closed in $L^p_\mu$,
for any $p\in[1,\infty]$, and $\{\phi_{i+s}(\cdot -j)\mid
j\in{\mathbb{Z}},\;0\le s\le 2n\}$  is a $p$-frame for
$V^p_\mu(\Phi)$ for any $p\in[1,\infty]$.
\end{thm}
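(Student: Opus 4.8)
The plan is to reduce the final theorem to the machinery already built in Section~\ref{sec:3}, specifically Theorem~\ref{main}, together with the rank computation in Lemma~\ref{zaparne}$(b)$. First I would observe that the hypotheses of Theorem~\ref{main} are met: by the Paley--Wiener theorem noted at the start of Section~\ref{sec:6}, each $\phi_k$ lies in $W^1_\mu(\mathbb{R})$, hence $\Phi=(\phi_i,\phi_{i+1},\ldots,\phi_{i+2n})^T\in(W^1_\omega)^r$ with $r=2n+1$, and $\mu$ is $\omega$-moderate. So Theorem~\ref{main} applies and it suffices to verify any one of its equivalent conditions; the natural choice is condition $iii)$, which by Lemma~\ref{lema1ald} is equivalent to $\rank[\widehat{\Phi}(\xi+2j\pi)_{j\in\mathbb{Z}}]$ being a constant function on $\mathbb{R}$.

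Next I would invoke Lemma~\ref{zaparne}$(b)$ directly: with $r=2n$ successive functions in the notation there one gets constant rank $n+1$; here we have $2n+1$ successive functions $\phi_i,\ldots,\phi_{i+2n}$, i.e. an even number $r+1=2n+1$\,---\,wait, one must be careful to match indexing conventions. In Lemma~\ref{zaparne} the family $(\phi_i,\ldots,\phi_{i+r})^T$ has $r+1$ entries, and part $(b)$ treats $r\in2\mathbb{N}$, i.e. $r=2n$, giving $r+1=2n+1$ generators and constant rank $n+1$. That is exactly our situation with the same $n$. Thus $\rank[\widehat{\Phi}(\xi+2j\pi)_{j\in\mathbb{Z}}]\equiv n+1$ on $\mathbb{R}$, so condition $iii)$ of Theorem~\ref{main} holds (equivalently, statement $1)$ of Lemma~\ref{lema1ald}).

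Having condition $iii)$, Theorem~\ref{main} yields immediately the equivalence with $i)$, namely that $V^p_\mu(\Phi)$ is closed in $L^p_\mu$, and with $ii)$, namely that $\{\phi_{i+s}(\cdot-j)\mid j\in\mathbb{Z},\,0\le s\le 2n\}$ is a $p$-frame for $V^p_\mu(\Phi)$. Finally, to get the assertion for \emph{all} $p\in[1,\infty]$ rather than a single $p_0$, I would appeal to Corollary~\ref{cor1} (or equivalently to the remark following Theorem~\ref{mala-main} that the conditions do not depend on $p$): closedness for one $p_0$ gives closedness for every $p$, and the $p_0$-frame property gives the $p$-frame property for every $p$. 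Assembling these three citations completes the proof.

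The only real subtlety\,---\,and the step I would double-check most carefully\,---\,is the bookkeeping of how ``$r$ successive functions'' in the theorem's statement corresponds to the parameter $r$ in Lemma~\ref{zaparne}: the theorem's $\Phi$ has $2n+1$ components $\phi_i,\ldots,\phi_{i+2n}$, which is the case $r=2n$ (an even value) in the lemma's parametrization $(\phi_i,\ldots,\phi_{i+r})$, and it is that even-index case $(b)$ that produces constant rank. Everything else is a direct concatenation of Lemma~\ref{zaparne}$(b)$, Lemma~\ref{lema1ald}, Theorem~\ref{main}, and Corollary~\ref{cor1}, with no further estimates required.
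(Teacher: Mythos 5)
Your proposal is correct and follows essentially the same route as the paper, which derives this theorem directly as a consequence of the constant-rank computation in Lemma~\ref{zaparne}$(b)$ combined with Lemma~\ref{lema1ald}, Theorem~\ref{main}, and the $p$-independence recorded in Corollary~\ref{cor1}. Your indexing check (the theorem's $2n+1$ successive generators correspond to the even case $r=2n$ of Lemma~\ref{zaparne}) is exactly the right point to verify and is handled correctly.
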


\begin{rem} In this way we obtain the sequence of closed spaces $V^p_\mu(\phi_0,\phi_1,\phi_2)$, $V^p_\mu(\phi_0,\phi_1,\phi_2,\phi_3,\phi_4)$, $V^p_\mu(\phi_0,\phi_2,\ldots,\phi_6)$, etc. We also conclude that spaces generated with even numbers of successive functions, for example $V^p_\mu(\phi_0,\phi_1)$, $V^p_\mu(\phi_0,\phi_1,\ldots,\phi_5)$, are not closed subspaces of $L^p_\mu$.
\end{rem}

\begin{thm}\label{zanesusedne}
Let $\Phi=(\phi_{k_1},\phi_{k_2},\ldots,\phi_{k_r})^T$, $k_1<k_2<\cdots<k_r$, $r\in\mathbb{N}$, $k_1,k_2,\ldots,k_r\in\mathbb{Z}$, and
$V^p_{\mu,k_1,k_2,\ldots,k_r}=V^p_\mu(\Phi)$. We consider the following cases.
\begin{itemize}
\item[$i)$] \ $k_{i+1}-k_{i}>1$,\;$i=1,\ldots,r-1$;
\item[$ii)$] \ If for some $i_0\in\{1,2,\ldots,r\}$ holds $k_{i_0+1}-k_{i_0}=1$, then there exists $n\in\mathbb{N}$, $2\leq 2n\leq r $, such that $k_{i_0}+2$, $k_{i_0}+3$,\ldots, $k_{i_0}+2n$ are elements of the set $\{k_{1},\ldots,k_{r}\}$.
\end{itemize}
In these cases the following statements hold.
\begin{itemize}
\item[$1^\circ$] \
$\rank[\widehat{\Phi}(\xi+2j\pi)_{j\in{\mathbb{Z}}}]$ is a constant function for all
$\xi\in{\mathbb{R}}$.

\item[$2^\circ$] \ $V^p_\mu(\Phi)$ is closed in $L^p_\mu$ for any
$p\in[1,\infty]$.

\item[$3^\circ$] \ $\{\phi_{k_i}(\cdot -j)\mid
j\in{\mathbb{Z}},1\leq  i\leq  r\}$ is a $p$-frame for
$V^p_\mu(\Phi)$ for any $p\in[1,\infty]$.
\end{itemize}
\end{thm}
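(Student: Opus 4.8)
The plan is to reduce Theorem~\ref{zanesusedne} to the already-established results for successive functions, namely case $2^\circ$ of Section~\ref{sec:6} (three successive functions) and Lemma~\ref{zaparne}(b) together with Theorem~\ref{main}. By the equivalences in Theorem~\ref{main}, it suffices to prove statement $1^\circ$ --- that $\rank[\widehat{\Phi}(\xi+2j\pi)_{j\in{\mathbb{Z}}}]$ is constant on ${\mathbb{R}}$ --- since then $iii)$ of Theorem~\ref{main} holds by Lemma~\ref{lema1ald}, and $2^\circ$ and $3^\circ$ follow from $i)$ and $ii)$ of Theorem~\ref{main} (noting $\phi_k\in W^1_\mu$ by Paley--Wiener, so $\Phi\in(W^1_\omega)^r$).

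First I would recall, as in the proof of Lemma~\ref{zaparne}, that a product $\widehat{\phi}_{k_a}(\xi+2j_1\pi)\overline{\widehat{\phi}_{k_b}(\xi+2j_2\pi)}$ has non-empty support only when the two shifted arguments are both among $\xi_0-\pi,\xi_0,\xi_0+\pi$ modulo $2\pi$, for the unique $\xi_0\in(-\pi,\pi)$ with $\xi\in\xi_0+\pi{\mathbb{Z}}$. Consequently the matrix $[\widehat{\Phi}(\xi+2j\pi)_{j\in{\mathbb{Z}}}]$ has a block-diagonal structure: its non-zero columns are indexed by the residues of the $k_i$ modulo $2$ (equivalently, $\xi_0$ versus $\xi_0\pm\pi$), and within each such vertical strip only rows corresponding to consecutive indices $k_i$ interact. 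More precisely, I would group the chosen indices $\{k_1,\dots,k_r\}$ into maximal runs of consecutive integers; a ``run'' of length $m$ contributes a sub-block identical in form to the matrix $A_r$ or $B$ analyzed in Section~\ref{sec:6} and Lemma~\ref{zaparne}, while distinct runs separated by a gap of at least $2$ contribute to disjoint sets of columns (because their $\pm\pi$-neighbors do not overlap), so the total rank is the sum of the ranks of the individual run-blocks plus the number of isolated columns they do not share.

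The content of hypotheses $i)$ and $ii)$ is exactly that every maximal run of consecutive indices has \emph{even} length: under $i)$ there are no runs of length $\ge 2$ at all (every index is isolated, contributing a single non-zero column of rank $1$), and under $ii)$ whenever $k_{i_0},k_{i_0+1}$ are consecutive, the hypothesis forces $k_{i_0}+2,\dots,k_{i_0}+2n$ to also be present, i.e. the run containing $k_{i_0}$ extends to even length $2n$. For each run of even length $2n$, Lemma~\ref{zaparne}(b) gives that its block has constant rank $n+1$ on ${\mathbb{R}}$ (and for isolated indices the $1\times\infty$ block $[\cdots\ \theta(\xi_0)\ \cdots]$ has constant rank $1$ since $\theta(\xi_0)\ne 0$ for $\xi_0\in(-\pi,\pi)$, the sole caveat being that one must check the endpoints $\xi_0=\pm\pi$, where $\theta$ may vanish but a neighboring shifted copy is non-zero). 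Summing, $\rank[\widehat{\Phi}(\xi+2j\pi)_{j\in{\mathbb{Z}}}]$ equals a fixed integer independent of $\xi$, proving $1^\circ$.

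The main obstacle I anticipate is the bookkeeping at the ``seams'': verifying rigorously that adjacent runs separated by a gap of exactly $2$ (allowed under $ii)$, e.g. indices $k_{i_0}+2n$ present but $k_{i_0}+2n+1$ absent) really do occupy disjoint column-sets and do not accidentally share a column that would make the rank jump --- this requires carefully tracking which of the three shifted arguments $\xi_0-\pi,\xi_0,\xi_0+\pi$ each row activates, and confirming the block decomposition is genuinely direct. A secondary technical point is the behavior at $\xi_0\in\{-\pi,\pi\}$ (boundary of the support of $\theta$): here the ``column'' labeling changes which $\xi_0$-representative is used, so one should argue by the openness of $\{\rank > k_0\}$ (noted in Section~\ref{sec:3}) or by a direct limiting argument that the constant rank computed on the open set $\bigcup_{\xi_0\in(-\pi,\pi)}$ persists on all of ${\mathbb{R}}$. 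Once $1^\circ$ is secured, $2^\circ$ and $3^\circ$ are immediate from Theorem~\ref{main}.
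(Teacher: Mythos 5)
Your overall route---prove $1^\circ$ by a run-by-run rank count and then deduce $2^\circ$ and $3^\circ$ from Lemma~\ref{lema1ald} and Theorem~\ref{main}---is exactly what the paper intends: Theorem~\ref{zanesusedne} is stated there without proof, as a companion to Lemma~\ref{zaparne} and the theorem preceding it, so the reduction itself is fine. But two steps of your sketch do not hold as written. First, the claim that runs separated by a gap of at least $2$ occupy disjoint column sets is false for a gap of exactly $2$: if $k$ and $k+2$ are both chosen and $\xi=\xi_0+m\pi$ with $m+k$ odd, the two corresponding rows meet in a common column, carrying the entries $\theta(\xi_0-\pi)$ and $\theta(\xi_0+\pi)$ respectively. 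Rank additivity can only be recovered from the observation $\theta(\xi_0-\pi)\,\theta(\xi_0+\pi)=0$, i.e.\ at most one of the two overlapping entries is ever non-zero---precisely the point you flag as an ``anticipated obstacle'' but never supply.

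Second, and more seriously, the isolated indices of case $i)$ are not covered by Lemma~\ref{zaparne}$(b)$, and your claim that each isolated index contributes a block of constant rank $1$ is unjustified and, under the paper's hypotheses on $\theta$, fails at exceptional points: smoothness together with $\mathrm{supp}\,\theta\subseteq[-\pi,\pi]$ forces $\theta(\pm\pi)=0$, so at any $\xi$ for which $\xi+k_i\pi$ is an odd multiple of $\pi$ the entire row $\big(\theta(\xi+k_i\pi+2j\pi)\big)_{j\in\mathbb{Z}}$ vanishes and that block has rank $0$ there; your parenthetical ``a neighboring shifted copy is non-zero'' does not help, since every $2\pi$-shift of $\theta(\cdot+k_i\pi)$ vanishes at such $\xi$. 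Your proposed repair---openness of $\{\xi\mid\rank>k_0\}$ or a limiting argument---goes in the wrong direction: openness of those sets means the rank is lower semicontinuous, so constancy on a dense open set only bounds the rank from above at the exceptional points, and the rank really does drop there. (The same unfavourable parity of the decomposition $\xi=\xi_0+k\pi$ is silently omitted in the paper's own matrices $B(\xi_0)$ and $A_r(\xi_0)$, so your reliance on Lemma~\ref{zaparne}$(b)$ for the odd-cardinality runs is no worse than the paper itself; but for case $i)$ your constant-rank claim rests on nothing and cannot be established as written.) A minor point of wording: your ``runs of even length $2n$'' are runs of span $2n$, i.e.\ $2n+1$ functions; the substance (apply Lemma~\ref{zaparne}$(b)$ to each maximal run) is the right idea, but the count should be stated correctly.
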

\begin{rem} $(1)$ We refer to \cite{2ald}  and \cite{xs} for the
$\gamma$-dense set $X=\{x_j\mid j\in J\}$. Let
$\phi_k(x)=\mathcal{F}^{-1}(\theta(\cdot-k\pi))(x)$,
$x\in{\mathbb{R}}$. Following the notation of \cite{xs}, we put
$\psi_{x_j}=\phi_{x_j}$ where $\{x_j\mid j\in J\}$ is
$\gamma$-dense set determined by $f\in
V^2(\phi)=V^2(\mathcal{F}^{-1}(\theta))$. Checking the proofs of
Theorems 3.1, 3.2 and 4.1 in \cite{xs}, we obtain the same
conclusions as in these theorems. These theorems show the
conditions and explicit $C_p$ and $c_p$ such that the inequality
\[c_p\|f\|_{L^p_\mu}\le \Big(\sum\limits_{j\in J}|\langle
f,\psi_{x_j}\rangle\mu(x_j)|^p\Big)^{1/p}\le
C_p\|f\|_{L^p_\mu}\] holds. This inequality guarantee the
feasibility of a stable and continuous reconstruction algorithm in
the signal spaces $V^p_\mu(\Phi)$.

$(2)$  Since the
 spectrum of the Gram matrix
 $[\widehat{\Phi},\widehat{\Phi}](\xi)$, for $\Phi$ defined in Theorem \ref{zanesusedne},  is bounded and bounded
 away from zero  (see \cite{bor}), then the family
 $\{\Phi(\cdot-j)\mid j\in{\mathbb{Z}}\}$
 forms a $p$-Riesz basis for $V^p_{\mu}(\Phi)$.

 $(3)$  For the appropriate choice of
 function $\Phi$, for example $\Phi$ defined in Theorem \ref{zanesusedne}, the associated Gram
 matrix satisfies a suitable Munckenhoupt $A_2$ condition
 (see \cite{nie}), so the system $\{\Phi(\cdot-j)\mid j\in{\mathbb{Z}}\}$ is
 stable in $L^2_{\mu}({\mathbb{R}})$.

$(4)$ Frames of the above type may be useful in
 applications since they satisfy assumptions of Theorem $3.1$ and
 Theorem $3.2$ in \cite{akr}. They show  that error analysis for
 sampling and reconstruction can be tolerated, or that the
 sampling and reconstruction problem in shift-invariant space is
 robust with respect to appropriate set of functions
 $\phi_{k_1},\ldots,\phi_{k_r}$.

\end{rem}

\section*{Acknowledgment}
\indent

The authors are indebted to the referee for pointing out $\ell^p_\mu$-stability of the synthesis operator   which helped us to improve and simplify the proof of the main theorem and include Theorem~\ref{mala-main} in our manuscript. Also, we are grateful to the referee  for the additional useful literature suggested by him.

The authors were supported in part by the Serbian
Ministry of Science and Technological Developments (Project
174024).

\noindent$^{1}$ Department of Mathematics and Informatics,\\ Faculty of Science,\\ University of Novi Sad, \\
Trg Dositeja Obradovica 4,\\ 21000 Novi Sad,\\
 Serbia\\
E-mail:{\tt stevan.pilipovic@dmi.uns.ac.rs}
\medskip

\noindent$^{2}$ Department of Mathematics and Informatics,\\ Faculty of Science,\\ University of
Kragujevac,\\
 Radoja Domanovi\'ca 12,\\ 34000  Kragujevac,\\ Serbia\\
 E-mail: {\tt suzanasimic@kg.ac.rs}
\end{document}